\newcommand{\R}{\mathbb{R}}
\newcommand{\Tan}{\mathrm{Tan}}
\newcommand{\B}{B}
\newcommand{\U}{U}
\newcommand{\N}{\mathbb{N}}
\newcommand{\Hd}{\mathcal{H}}
\newcommand{\reg}{\mathrm}
\DeclareMathAlphabet{\pazocal}{OMS}{zplm}{m}{n}
\newtheorem*{thm*}{Theorem}
\DeclarePairedDelimiter\floor{\lfloor}{\rfloor}
\newtheorem{thm}{Theorem}[section]
\newtheorem{lem}[thm]{Lemma}
\newtheorem{proposition}[thm]{Proposition}
\newtheorem{corollary}[thm]{Corollary}
\newtheorem{problem}[thm]{Problem}
\theoremstyle{definition}
\newtheorem{definition}[thm]{Definition}
\newtheorem*{nonNumDef}{Definition}
\theoremstyle{remark}
\newtheorem{remark}[thm]{Remark}
\definecolor{darkgrn}{rgb}{0, 0.75, 0}
\title{The Maximum Distance Problem and Minimum Spanning Trees}
\author[1]{Enrique Alvarado\thanks{enrique.alvarado@wsu.edu}}
\author[1]{Bala Krishnamoorthy\thanks{kbala@wsu.edu}}
\author[1]{Kevin R. Vixie \thanks{vixie@speakeasy.net}}
\affil[1]{Department of Mathematics and Statistics, Washington State University}
\begin{document}
\maketitle
\begin{abstract}
  Given a compact $E\subset \R^n$ and $s > 0$, the maximum distance problem seeks %asks to find
  a compact and connected subset of $\R^n$ of smallest one dimensional Hausdorff measure whose $s$-neighborhood covers $E$.
  For $E\subset \R^2$, we prove that minimizing over minimum spanning trees that connect the centers of balls of radius $s$, which cover $E$, solves the maximum distance problem.
  The main difficulty in proving this result is overcome by the proof of Lemma~\ref{lem:main}, which states that one is able to cover the $s$-neighborhood of a Lipschitz curve $\Gamma$ in $\R^2$ with a finite number of balls of radius $s$, and connect their centers with another Lipschitz curve $\Gamma_\ast$, where $\mathcal{H}^1(\Gamma_\ast)$ is arbitrarily close to $\mathcal{H}^1(\Gamma)$.
  We also present an open source package for computational exploration of the maximum distance problem using minimum spanning trees, available at \href{https://github.com/mtdaydream/MDP_MST}{github.com/mtdaydream/MDP\_MST}.
\end{abstract}
{\small{\tableofcontents}}

%%%%%%%%%%%%%%%%%%%%%%%%%%%%%%%%%%%%%%%%%%%%%
%%%%%%%%%%%%%%%%%%%%%%%%%%%%%%%%%%%%%%%%%%%%%
\section{Introduction}\label{sec:intro}
%%%%%%%%%%%%%%%%%%%%%%%%%%%%%%%%%%%%%%%%%%%%%
%%%%%%%%%%%%%%%%%%%%%%%%%%%%%%%%%%%%%%%%%%%%%
	
There are many variants of the {\it traveling salesman problem} in $\R^2$.
The classic problem seeks the shortest connected tour through a finite collection of points $E = \{x_i\}_{i = 1}^N\subset\R^2$, where the points represent cities a salesman has to visit.
One variant of the TSP is the {\it analyst's traveling salesman problem} (ATSP) \cite{jones1990rectifiable,okikiolu1992characterization,Sc2007}, which essentially asks the same question, except for one crucial difference---the set $E$ is not restricted to finite collections of points (otherwise it reduces to the classical traveling salesman problem).
The ATSP seeks necessary and sufficient conditions for the existence of a \emph{finite continuum} $\Gamma$ containing $E$, where, by \emph{finite continuum} we mean a set which is compact, connected and has finite $\Hd^1$ measure.
Here $\Hd^1(\Gamma)$ is the one dimensional Hausdorff measure of $\Gamma$ (see Definition~\ref{def:length}).

Because for general sets in $E\subset\R^2$, it is often the case that $E$ is not contained in {\it any} finite continuum, we might consider trying to find a finite continuum, $\Gamma$, of smallest $1$-dimensional Hausdorff measure, such that the maximum distance from $\Gamma$ to any point in $E$ is at most $s > 0$.
This is the problem we focus on in the current paper.

We will see that in fact, one could just as easily have defined a finite continuum to be a compact, connected, $1$-rectifiable set of finite $\Hd^1$ measure (or even the Lipschitz image of a compact interval) by using the ideas presented by Falconer~\cite[\S3.2]{falconer-1986-geometry}, or in a slightly more precise form in Theorem~\ref{thm:davidSemmes}, which is stated and sketched by David and Semmes \cite[\S1.1]{david1993analysis}.
We discuss this aspect in more detail in Section \ref{sec:def}.
For those who are not familiar with these types of characterizations of rectifiable sets (which is a very active area of current research), we recommend they start with the excellent book by Falconer \cite{falconer-1986-geometry}.

\subsection{The Maximum Distance Problem and Steiner Trees}
\label{sec:MDP-steiner}

% As mentioned above, we are interested in  instead of minimizing the
% $\mathcal{H}^1$-measure over rectifiable curves that contain $E$, we
% minimize over rectifiable curves, $\Gamma$, whose closed
% $s$-neighborhoods,
% $B(\Gamma, s) := \{x \in \R^2 : \mathrm{dist}(\Gamma, x) \leq s\}$,
% contain $E$.
As stated in the introduction the minimization problem we focus on in this paper is:
\begin{align}\label{eq:maxDistMin}
  \lambda(E, s) := \min\{\mathcal{H}^1(K) : K\ \mathrm{is\ a\ finite\ continuum\ and}\ E\subset B(K, s)\},
\end{align}
where $B(K, s) := \{x \in \R^2 : \mathrm{dist}(K, x) \leq s\}$ is the closed $s$-neighborhood of $K$.
In the literature, this problem is called the {\it maximum distance problem}, or MDP in short, and we will use that name to refer to it here.
A finite continuum $\Gamma$ such that $B(\Gamma, s)\supset E$ and $\mathcal{H}^1(K)=\lambda(E, s)$ is called a {\it minimizer of $\lambda(E, s)$}, or an {\it $s$-maximum distance minimizer} of $E$.
As we will see, for compact $E\subset \R^n$ and $s > 0$, minimizers of $\lambda(E, s)$ always exist.

Note that any bounded $E\subset\R^2$ is clearly contained in the $s$-neighborhood of a finite continuum (for any $s > 0$).
Therefore, asking for sufficient and necessary conditions for the existence of such a set, in analogy to the ATSP question, is not interesting.
The existence of minimizers, i.e., finding a $\Gamma$ such that $\Hd^1(\Gamma) = \lambda(E, s)$ and $B(\Gamma, s) \supset E$, is more interesting, but is straightforward using a standard application of Go\l{}\k{a}b's Theorem.
The reader can see Falconer's book~\cite{falconer-1986-geometry} for the Go\l{}\k{a}b's Theorem in $\R^n$, or Section 4.4 of Ambrosio and Tilli's \textit{Topics on Analysis in Metric Spaces} \cite{ambrosio-2004-topics}, where the authors use these facts to get existence of geodesics in metric spaces.
We present the details of existence of minimizers in Section~\ref{sec:exist}.

Because of this difference with the ATSP, we focus on answering a different question that is motivated by the following simple heuristic, which we call the {\it cover-and-connect} heuristic.

\begin{tcolorbox}[center, width = 15cm, colback = white, coltitle = black, title = Cover-and-Connect, fonttitle=\bfseries, colbacktitle=white]
\begin{enumerate}[leftmargin = .5cm]
\item {\it Cover} $E$ with a finite number of balls of radius $s$, centered on a set of points $X$.
  \\
\vspace*{-0.08in}  
\item {\it Connect} all the centers in $X$ with a closed connected curve $\Gamma$.
\end{enumerate}
\end{tcolorbox}

In this paper, we let $\Gamma$ be either the {\it Steiner tree} $S_X$ over $X$, or the {\it minimum spanning tree} $T_X$ over $X$.
See Problem~\ref{prob:SP} and Problem~\ref{prob:mst} for related definitions.
Since $X \subset \Gamma$, the $s$-neighborhood of $\Gamma$ contains the balls, and therefore $E$.
Thus $\Gamma$ is a candidate minimizer.

In the cover-and-connect heuristic, note that since we are connecting all the points in $X$, we might as well connect them with a Steiner tree over $X$. This leads us to ask the following question that motivated our main theorem, Theorem \ref{thm:main}. 
\begin{center}
{\it How close is $\mathcal{H}^1(S_X)$ to $\lambda(E, s)$?} 
\end{center}
One can come up with many examples of $E$ where any Steiner tree $S_X$ generated over centers of balls that cover $E$ satisfies $\mathcal{H}^1(S_X) > \lambda(E, s)$.
For a useful and simple example, let $E$ equal the $s$-neighborhood of a finite line segment in $\R^2$ (see Figure~\ref{fig:LineSegmentCase}).
Although this example shows that we do not have strict equality with any $S_X$, the main result of this paper shows that there is a sequence of finite point sets $X_i = \{x^i_k\}_{k = 1}^{n_i}\subset \R^2$ such that $E\subset B(X_i, s)$ for all $i$ and
$\Hd^1(S_{X_i})\rightarrow \lambda(E, s)$ as $i\rightarrow\infty$.

In particular, for a given compact $E\subset \R^2$ and
$s > 0$, defining the {\it{$s$-spanning length of $E$}} as
\begin{align}\label{eq:spanninglength}
  \sigma(E, s) :=& \inf\{\mathcal{H}^1(S_X) : X = \{x_i\}_{i = 1}^N,\ B(X, s) \supset E\},
\end{align}
we establish the following main result.
{
\renewcommand{\thethm}{\ref{thm:main}}
  \begin{thm} %NoNumber}
    Let $E\subset \R^2$ be compact and let $s > 0$. Then
    \begin{align*}
      \sigma(E, s) = \lambda(E, s).
    \end{align*}
  \end{thm}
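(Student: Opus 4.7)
The plan is to establish the two inequalities $\lambda(E,s) \le \sigma(E,s)$ and $\sigma(E,s) \le \lambda(E,s)$ separately. The first is essentially tautological: for any finite $X \subset \R^2$ with $B(X,s) \supset E$, a Steiner tree $S_X$ is compact, connected, and has finite $\Hd^1$-measure, hence is a finite continuum; and since $X \subset S_X$, we have $B(S_X, s) \supset B(X,s) \supset E$. Thus $S_X$ is an admissible competitor for the infimum defining $\lambda(E,s)$, so $\lambda(E,s) \le \Hd^1(S_X)$. Taking the infimum over all such $X$ gives $\lambda(E,s) \le \sigma(E,s)$.

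The reverse inequality is the substantive one, and it is where Lemma~\ref{lem:main} does all the work. By the existence result to be proven in Section~\ref{sec:exist}, fix a minimizer $\Gamma$ of $\lambda(E,s)$, so $E \subset B(\Gamma, s)$ and $\Hd^1(\Gamma) = \lambda(E,s)$. Using the David--Semmes characterization (Theorem~\ref{thm:davidSemmes}), we may take $\Gamma$ to be (the image of) a Lipschitz curve in $\R^2$. Fix $\epsilon > 0$ and apply Lemma~\ref{lem:main} to $\Gamma$ to obtain a finite set $X = \{x_1,\dots,x_N\} \subset \R^2$ and a Lipschitz curve $\Gamma_\ast$ through the points of $X$ such that $B(\Gamma, s) \subset \bigcup_{i=1}^N B(x_i, s) = B(X,s)$ and $\Hd^1(\Gamma_\ast) < \Hd^1(\Gamma) + \epsilon$. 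Since $\Gamma_\ast$ is a connected set containing $X$, it is an admissible competitor for the Steiner problem on $X$, so $\Hd^1(S_X) \le \Hd^1(\Gamma_\ast)$. Combined with $E \subset B(\Gamma, s) \subset B(X,s)$, this yields
\[
\sigma(E,s) \;\le\; \Hd^1(S_X) \;\le\; \Hd^1(\Gamma_\ast) \;<\; \lambda(E,s) + \epsilon,
\]
and letting $\epsilon \to 0$ finishes the argument.

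The main obstacle, as signaled in the abstract, is entirely concentrated in Lemma~\ref{lem:main}. The subtlety is that a naive covering of the tubular neighborhood $B(\Gamma,s)$ by a finite collection of $s$-balls centered on or near $\Gamma$ (say at points spaced so that the balls cover the full tube, including the two caps at the endpoints and the ``outer'' side at sharp turns) may force the connecting curve $\Gamma_\ast$ through those centers to detour significantly away from $\Gamma$, inflating $\Hd^1(\Gamma_\ast)$ beyond $\Hd^1(\Gamma) + \epsilon$. A successful construction must exploit the rectifiability and essentially one-dimensional nature of $\Gamma$ to split it into short, nearly straight sub-arcs on which the cover can be chosen so that the added length of $\Gamma_\ast$ over $\Gamma$ is controllable, and then assemble these local constructions into a single Lipschitz curve without accumulating too much extra length at junctions. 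Everything else in the present theorem then reduces to the two short inclusions above.
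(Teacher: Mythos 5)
Your proposal is correct and follows essentially the same route as the paper: reduce to a minimizer $\Gamma$ of $\lambda(E,s)$ (which exists by Theorem~\ref{thm:existence} and is a Lipschitz curve by Theorem~\ref{thm:davidSemmes}), apply Lemma~\ref{lem:main} to get $X$ and $\Gamma_\ast$, and sandwich $\Hd^1(S_X)$ between $\lambda(E,s)$ and $\lambda(E,s)+\epsilon$. Your explicit statement of the easy inequality $\lambda(E,s)\le\sigma(E,s)$ and your remarks on where the difficulty of Lemma~\ref{lem:main} lies match the paper's reasoning.
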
%NoNumber}
}

\begin{remark}\label{rem:main}
  In general, Steiner trees $S_X$ over $X$ may introduce a new collection of branching points $Y$ (often called {\it Steiner points} in the literature).
  If we then consider the new collection of points $X' = X \cup Y$, it is a fact that the minimum spanning tree $T_{X'} = S_X$.
  Therefore, the definition of $\sigma(E, s)$ is unchanged when replacing $S_X$ in Equation~(\ref{eq:spanninglength}) with a minimum spanning tree $T_X$.
\end{remark}

By the above remark, we prove the following main Corollary.
This is an important clarifying remark due to the fact that Steiner trees are difficult to compute, but minimum spanning trees can be computed in polynomial time.
{
\renewcommand{\thethm}{\ref{cor:main}}
  \begin{corollary} %NoNumber}
    Let $E\subset \R^2$ be compact and let $s > 0$. Define the analogous 
    \begin{align*}
      \sigma'(E, s) :=& \inf\{\mathcal{H}^1(T_X) : X = \{x_i\}_{i = 1}^N,\ B(X, s) \supset E\}
    \end{align*}
    where we are taking minimum spanning trees $T_X$ over $X$ instead of Steiner trees over $X$.
    Then 
    \begin{align*}
      \sigma'(E, s) = \lambda(E, s).
    \end{align*}
  \end{corollary}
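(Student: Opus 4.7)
The plan is to deduce the corollary directly from Theorem~\ref{thm:main} and Remark~\ref{rem:main}, by sandwiching $\sigma'(E,s)$ between $\lambda(E,s)$ and $\sigma(E,s)$ and then invoking the main theorem to collapse the sandwich.

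First I would establish the lower bound $\sigma'(E,s)\ge \lambda(E,s)$. Fix any finite $X\subset\R^2$ with $B(X,s)\supset E$. The minimum spanning tree $T_X$ is a tree on the points of $X$, hence compact, connected, and with finite $\mathcal{H}^1$ measure, i.e., a finite continuum. Since $X\subset T_X$, we have $B(T_X,s)\supset B(X,s)\supset E$, so $T_X$ is admissible in the infimum defining $\lambda(E,s)$. Therefore $\mathcal{H}^1(T_X)\ge \lambda(E,s)$, and taking the infimum over all such $X$ yields $\sigma'(E,s)\ge \lambda(E,s)$.

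Next I would establish the reverse bound $\sigma'(E,s)\le \lambda(E,s)$ by reducing it to the Steiner-tree version via Remark~\ref{rem:main}. Given any finite $X\subset\R^2$ with $B(X,s)\supset E$, let $Y$ denote the (finitely many) Steiner points introduced by a Steiner tree $S_X$ over $X$, and set $X':=X\cup Y$. The remark asserts $T_{X'}=S_X$, so in particular $\mathcal{H}^1(T_{X'})=\mathcal{H}^1(S_X)$. Moreover $X\subset X'$ gives $B(X',s)\supset B(X,s)\supset E$, so $X'$ is admissible in the infimum defining $\sigma'(E,s)$. Hence $\sigma'(E,s)\le \mathcal{H}^1(T_{X'})=\mathcal{H}^1(S_X)$, and taking the infimum over $X$ gives $\sigma'(E,s)\le \sigma(E,s)$. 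Combined with Theorem~\ref{thm:main}, which says $\sigma(E,s)=\lambda(E,s)$, this yields $\sigma'(E,s)\le \lambda(E,s)$.

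I do not anticipate any real obstacle here: the substantive content has already been packed into Theorem~\ref{thm:main} (whose proof rests on Lemma~\ref{lem:main}) and into the identification $T_{X\cup Y}=S_X$ recorded in Remark~\ref{rem:main}. The only point requiring mild care is making sure $Y$ is a finite set, so that $X'=X\cup Y$ is still a finite collection of centers eligible for the infimum defining $\sigma'(E,s)$; this is standard for Steiner trees on finite planar point sets, where the number of Steiner points is bounded by $|X|-2$.
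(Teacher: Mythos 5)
Your proposal is correct and follows essentially the same route as the paper: the paper also deduces the corollary from Theorem~\ref{thm:main} together with the identification $T_{X\cup Y}=S_X$ from Remark~\ref{rem:main}, concluding $\sigma'(E,s)=\sigma(E,s)=\lambda(E,s)$. Your write-up is in fact slightly more explicit about the lower bound (you argue $\sigma'\ge\lambda$ directly by noting each $T_X$ is an admissible finite continuum), but this is the same sandwich argument the paper intends.
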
%NoNumber}
}

The proof of Theorem~\ref{thm:main} will follow from Lemma~\ref{lem:main}, which constitutes the heart of our paper.
Intuitively, this lemma says that given any $\epsilon >0$, the $s$-neighborhood of any Lipschitz curve $\Gamma$ is contained in a finite number of balls of radius $s$, whose centers are connected by another finite continuum $\Gamma_\ast$ such that $\mathcal{H}^1(\Gamma_\ast)$ is within $\epsilon$ of $\mathcal{H}^1(\Gamma)$.
We present the precise statement of this Lemma below.
{
\renewcommand{\thethm}{\ref{lem:main}}
\begin{lem}%NoNumber}
  Let $s > 0$ and $\Gamma\subset \R^2$ be a Lipschitz curve of positive length.
  Then, given $\epsilon > 0$, there exist a finite point set $X := \{x_i\}_{i = 1}^N\subset \R^2$ and a Lipschitz curve $\Gamma_\ast$ that contains $X$ such that 
  \begin{align*}
    B(X, s) \supset B(\Gamma, s) \quad \mathit{and}\quad \mathcal{H}^1(\Gamma) \leq \mathcal{H}^1(\Gamma_\ast) \leq \mathcal{H}^1(\Gamma) + \epsilon. 
  \end{align*}
\end{lem}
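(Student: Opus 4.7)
My plan is to construct $\Gamma_\ast$ as $\Gamma$ itself together with many short perpendicular ``antennae'' (short straight segments) attached at points along a fine arc-length partition of $\Gamma$; the tips of these antennae, together with the two endpoints of $\Gamma$, will form $X$. The inclusion $\Gamma \subset \Gamma_\ast$ makes the lower bound $\mathcal{H}^1(\Gamma) \leq \mathcal{H}^1(\Gamma_\ast)$ automatic, and if there are $N$ antennae each of length $\eta$, the extra length is $N\eta$, which I will arrange to be at most $\epsilon$.

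Concretely, parametrize $\Gamma$ by arc length as $\gamma:[0,L]\to\R^2$ and fix a partition $0 = t_0 < t_1 < \cdots < t_N = L$ with spacing $\delta = L/N$. At each $\gamma(t_i)$, attach an antenna of length $\eta$ in a unit direction $n_i$ perpendicular to a local tangent of $\Gamma$, alternating the side as $i$ increments; also include $\gamma(0)$ and $\gamma(L)$ in $X$, whose own $s$-balls take care of the semicircular end ``caps'' of $B(\Gamma, s)$. The model case is $\Gamma = [0, L] \times \{0\}$ with $n_i = (0, 1)$ and tips $x_i = (i\delta, (-1)^i \eta)$: the hardest points in $B(\Gamma, s)$ to cover lie on $y = \pm s$ midway between two same-side tips (horizontal distance $\delta$ away), and the squared distance from such a point to the nearest same-side tip is $\delta^2 + (s - \eta)^2$. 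This is at most $s^2$ precisely when $\eta^2 - 2s\eta + \delta^2 \leq 0$, i.e., when $\eta \geq s - \sqrt{s^2 - \delta^2} \approx \delta^2/(2s)$. Taking $\eta = \delta^2/s$ gives total added length $N\eta = L\delta/s$, so any $\delta \leq s\epsilon/L$ makes this at most $\epsilon$.

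For a general Lipschitz $\Gamma$ the same construction is used, but the choice of normal $n_i$ requires care since $\gamma'$ exists only almost everywhere and can behave erratically on small scales. I would resolve this by first replacing $\Gamma$ by a $C^1$ approximation $\tilde\Gamma$ obtained by mollifying $\gamma$, with $\mathcal{H}^1(\tilde\Gamma) \leq \mathcal{H}^1(\Gamma) + \epsilon/3$ and contained in a $\rho$-neighborhood of $\Gamma$ for small $\rho > 0$; on $\tilde\Gamma$ the unit normal varies continuously so the antenna directions are well-defined, and each piece $\tilde\gamma([t_{i-1}, t_i])$ is within $o(\delta)$ of its chord. I carry out the antenna construction on $\tilde\Gamma$ with $\eta$ slightly enlarged (by $\rho$) so that the resulting balls $B(x_i, s)$ cover $B(\Gamma, s)$ rather than merely $B(\tilde\Gamma, s)$; to preserve the inclusion $\Gamma \subset \Gamma_\ast$ I take $\Gamma_\ast$ to be $\Gamma \cup \tilde\Gamma \cup (\text{antennae})$, the extra $\mathcal{H}^1(\tilde\Gamma)$ contribution being absorbed in the budget.

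The main obstacle is the coverage verification, specifically the claim that for every $p \in B(\Gamma, s)$ there is an antenna tip on the correct side of $\Gamma$ relative to $p$ within distance $s$. The model computation is essentially sharp, so the local deviation of $\gamma$ from its tangent or chord on each piece of length $\delta$ must be controlled at the scale $o(\delta^2/s)$; this is exactly what the smoothing step (or, alternatively, restricting partition points to Lebesgue points of $\gamma'$) provides. One also has to verify that the alternating antenna pattern, augmented by the endpoint balls at $\gamma(0)$ and $\gamma(L)$, indeed produces a tip on the correct side within arc-length $\delta$ of any nearest-point $q \in \Gamma$ to $p$, which reduces to a direct computation along each almost-flat piece.
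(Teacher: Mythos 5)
Your model computation and the antenna idea are exactly the engine of the paper's proof (they appear essentially verbatim in its line-segment and $C^1$ warm-up cases), but the step reducing the general Lipschitz case to a smooth one is where your argument breaks. You mollify $\gamma$ at a single scale $\rho$, run the construction on $\tilde\Gamma$, and lengthen each of the $N\approx L/\delta$ antennae by $\rho$ so their tips cover $B(\Gamma,s)$ rather than merely $B(\tilde\Gamma,s)$. These two uses of $\rho$ fight each other quantitatively. The mollified curve has curvature of order $\mathrm{Lip}(\gamma)/\rho$, so its deviation from a chord over the antenna spacing $\delta$ is of order $\delta^2/\rho$; to keep the resulting extra antenna length below $\epsilon$ you need roughly $\delta^2/\rho\lesssim \epsilon\delta/L$, i.e.\ $\rho\gtrsim L\delta/\epsilon$. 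But lengthening all $N$ antennae by $\rho$ costs $N\rho=L\rho/\delta$, forcing $\rho\lesssim\epsilon\delta/L$. The two constraints are compatible only when $\epsilon\gtrsim L$, so for small $\epsilon$ no choice of $(\rho,\delta)$ works. Your claim that smoothing delivers deviation $o(\delta^2/s)$ is the symptom: a $C^1$ (or mollified) curve gives deviation $o(\delta)$ from its tangent, not $o(\delta^2)$, and the only way to live with $o(\delta)$ is to let the perpendicular segments span the \emph{entire width} of the local tube containing the curve, at which point the tube width need only be small relative to $\delta$ times the error budget. Your parenthetical alternative (restricting partition points to Lebesgue points of $\gamma'$) points the right way but is also insufficient as stated: differentiability at the partition points controls nothing about $\gamma$ between them, and the rate in the $o(\delta)$ is not uniform over the differentiability set. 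You also leave connectivity unaddressed: $\Gamma\cup\tilde\Gamma\cup(\text{antennae})$ is in general disconnected, since a mollified curve need not meet the original one and your antennae are attached to $\tilde\Gamma$, while the lemma requires $\Gamma_\ast$ to be a single Lipschitz curve.

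The paper closes the gap without replacing $\Gamma$. By Rademacher the set $G$ of differentiability points of $\gamma$ has full measure in $[0,L]$; one extracts a \emph{compact} $K\subset G$ with $\mathcal{L}^1([0,L]\setminus K)<\xi$, and a finite subcover of $K$ produces finitely many parameter intervals whose images lie in cones of uniformly small aspect ratio $\alpha$ --- this is where the missing uniformity is bought. On these good pieces your antenna computation is carried out with prongs of length $\mu_i+2\delta_{n}$ crossing the whole cone (so no choice of ``side'' and only $\mu_i\lesssim\alpha\bar\rho_i$ is needed). The leftover bad intervals have total length $<\xi$, so their images sit in balls of radius $\mathrm{Lip}(\gamma)\xi_i$ and are covered by four spokes of total length $8\,\mathrm{Lip}(\gamma)\xi_i$, costing at most $8\,\mathrm{Lip}(\gamma)\xi\le\epsilon/2$ overall. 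Some version of this good/bad decomposition (or the Egorov-type argument of Miranda~Jr.--Paolini--Stepanov cited in the paper) is what your proposal is missing.
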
%NoNumber}
}

We will now briefly outline previous work on the maximum distance problem and closely related problems, such as the average distance problem, the constrained average distance problem, and its $L^p$ variants.

\subsection{Previous Work}
\label{sec:previous}

In the mathematical literature, the maximum distance problem (MDP) evolved from a different starting point than ours.
The problem was first introduced by Buttazzo, Oudet, and Stepanov~\cite{buttazzo2002optimal} when they studied optimal urban transportation networks in cities.
In their case, optimality meant minimizing the {\it average distance} between the population in the city and the transportation network itself.
More precisely, the city population was modeled as a measure $\mu$ on $\R^2$, and transportation networks were modeled as connected, compact sets $\Sigma$ with $\mathcal{H}^1(\Sigma) \leq l$, for some fixed constant
$l > 0$.
The objective was to minimize the average distance
\[\int_{\R^2} \mathrm{dist}(x,\Sigma)\,d\mu (x)\]
over all connected compact sets $\Sigma$ such that $\mathcal{H}^1(\Sigma) \leq l$.
One can think of this problem as the $L^1$ version of the $L^\infty$ ``dual'' maximum distance problem, where instead of minimizing $\mathcal{H}^1(\gamma)$ over all closed connected $\gamma$ such that $E\subset B(\gamma, s)$ for a fixed $s$, we minimize $s > 0$ over all closed connected $\gamma$ such that $E\subset B(\gamma,s)$ and $\Hd^1(\gamma) \leq l$ for fixed $l$.
This is the $L^\infty$ version in the sense that, at least in the case of well behaved measures $\mu$, solving the problem yields 
\[  ||\mathrm{dist}(x, \Sigma)||^\infty_\mu :=  \inf \{r > 0 :  \mu\{ x : \mathrm{dist}(x, \Sigma) > r\} = 0\}.\]
Noting that
\[ ||\mathrm{dist}(x,\Sigma)||^\infty_\mu =   \lim_{k\rightarrow\infty} \left(\int_{\R^2} [\mathrm{dist}(x,\Sigma)]^k \,d\mu (x)\right)^{\frac{1}{k}},\]
we see that 
 \[  \min_{\{\Sigma : \mathcal{H}^1(\Sigma) \leq l\}} ||\mathrm{dist}(x,\Sigma)||^\infty_\mu = \min_{\{\Sigma : \mathcal{H}^1(\Sigma) \leq l\}}  \lim_{k\to\infty} \left(\int_{\R^2} [\mathrm{dist}(x, \Sigma)]^k \, d\mu (x)\right)^{\frac{1}{k}}, \]
and the connection to the above $L^1$ version becomes more apparent.

Paolini and Stepanov \cite{paolini2004qualitative} studied both the maximum distance problem and its dual, and were able to show that minimizers of the of the maximum distance problem and its dual are in fact equivalent in $\R^n$.\footnote{The terminology that we use when naming the "maximum distance problem" and it's ``dual'' are reversed in~\cite{paolini2004qualitative}.
What we call the {\it maximum distance problem} is termed by them as the {\it dual to the maximum distance problem}, and {\it vice versa}.
We chose to give it the same name due to the equivalent nature of the two problems.}
	
These papers began a large line of work on the average distance problem and on related problems such as the one studied here.
For an overview of the average distance problem, see the wonderful survey of Lemanent~\cite{Lem11}, and references therein.
	
Along this line of work, Teplitskaya \cite{teplitskaya2018regularity} recently announced an enlightening regularity result proven in~\cite{teplitskaya2019regularity}.
The result states that minimizers of the maximum distance problem consist of a finite number of curves which have one sided tangent lines at each point.
Teplitskaya also shows that the angles between these tangent lines are greater than or equal to $2\pi/3$.

% This gives a very clear picture of the minimizers of the maximum distance problem, since it shows that minimizers are in fact isotopic to Steiner trees.

%As far as we can tell, the results in this paper are new, except for the existence result in Section~\ref{sec:exist}; but we have included it to keep the paper self-contained.

\begin{remark}
  {\rm 
    The authors thank the anonymous reviewers of a previously submitted version of this paper for pointing us to Theorem 3.7 of Miranda Jr., Paolini, and Stepanov \cite{miranda2006one}.
    This theorem is in fact equivalent to Lemma~\ref{lem:main} in our paper, and in addition, the techniques they used in their proof are similar to ours. 
    We describe the differences in Remark~\ref{rem:MPS}.
  }
\end{remark}

\subsection{Outline of the Proofs}

As a means of illuminating the path to the proof of Lemma~\ref{lem:main}, and hence Theorem~\ref{thm:main}, we show that $\sigma(E, s) = \lambda(E, s)$ for two simpler cases.
It is our hope that in doing so, a non-expert will be able to get a better instinctive feel for the types of arguments used in the proofs of Lemma~\ref{lem:main} and Theorem~\ref{thm:main}.
In Lemma~\ref{lem:linesegment} we assume that $E$ is the $s$-neighborhood of a line segment, and then in Proposition~\ref{prop:lambdaandsigma}, we assume that the $s$-maximum distance minimizer of $E$ is a $C^1$ curve, rather than merely a finite continuum as we do so in Theorem~\ref{thm:main}.
	
The approach we take to prove Theorem~\ref{thm:main}, Lemma~\ref{lem:linesegment}, and Proposition~\ref{prop:lambdaandsigma} is to show the existence of Steiner trees $\{S_n\}$ such that $\mathcal{H}^1(S_n) \to \lambda(E, s)$ as $n \to \infty$.
Of course, as our definition of $\sigma(E, s)$ requires, each $S_n$ will be taken over a finite collection of points $X_n$ such that $B(X_n, s) \supset E$.
We meet this requirement by explicitly constructing $X_n$ and a curve $\Gamma^n_\ast$ that connects all the points in $X_n$.
Since by definition $\mathcal{H}^1(S_n) \leq \mathcal{H}^1(\Gamma^n_\ast)$, and since $\lambda(E, s) \leq \mathcal{H}^1(S_n)$, it suffices to show that $\mathcal{H}^1(\Gamma^n_\ast) \to \lambda(E, s)$ as $n\to +\infty$.
	
The key technique for proving Lemma~\ref{lem:main} is revealed in the simple case where $E$ itself is the $s$-neighborhood of a line segment $L$.
First notice that the $s$-maximum distance minimizer for $E$ is the line segment $L$.
If we want a Steiner tree $S_n$ over $X_n = \{x_i\}_{i = 1}^n$ to equal $L$, then $X_n$ must contain the endpoints of $L$, and $X_n$ must also be contained in $L$.
However, since $X_n$ only contains a finite number of points, $\B(X_n, s)$ cannot contain $E$ (see the left picture of Figure~\ref{fig:LineSegmentCase}).
\begin{figure}[htp!]
  \begin{minipage}{.5\textwidth}
    \centering \includegraphics[width=3.1in]{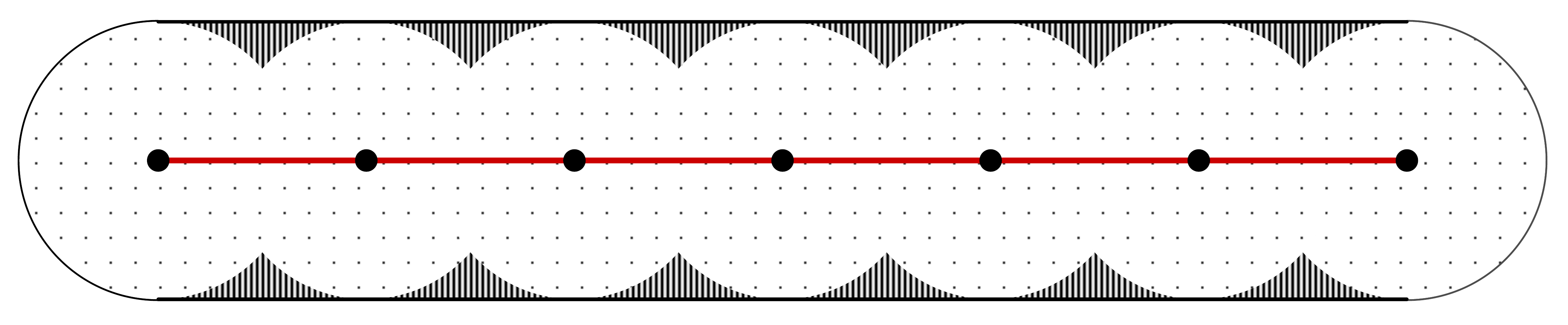}
  \end{minipage}%
  \begin{minipage}{.5\textwidth}
    \centering \includegraphics[width=3.1in]{LineSegmentCase3.pdf}
  \end{minipage}
  \caption{  \label{fig:LineSegmentCase}
    On the left, the $s$-neighborhood of the line segment is not covered by the $s$-neighborhood of a finite number of points lying on the line segment.
    On the right, we extend our previous points outwards {\it just} enough to cover the $s$-neighborhood of the line segment.
    The length of the new $1$-rectifiable set connecting the new points is equal to the union of the red and green line segments, and is not much larger than the length of the original red line segment.
    Note that although we we do not need to extend the endpoints, it will be necessary to do so in the general case. 
  }
\end{figure}

Nonetheless, as is depicted on the right picture of Figure~\ref{fig:LineSegmentCase}, we may ``extend'' each point in $X_n$ up and down (and also to the sides for the endpoints of $L$) by a small amount $\delta_n$ so that the $s$-neighborhood of these extended points $X'_n$ contains $E$.
For any large enough $n$, we have that $\mathcal{H}^1(S'_n)$ for the Steiner tree $S'_n$ over $X'_n$ is bounded above by $\mathcal{H}^1(\Gamma^n_\ast)$, where $\Gamma^n_\ast = L\cup P^n$, where $P^n$ are the short $2n + 2$ line segments, each of which is of length $\delta_n$ (see right picture of Figure~\ref{fig:LineSegmentCase}).
Since $\Gamma^n_\ast$ connects all points in $X'_n$ and since
\begin{align*}
  \mathcal{H}^1(\Gamma^n_\ast) = \mathcal{H}^1(L) + (2n + 2)\delta_n,
\end{align*}  
showing that $\mathcal{H}^1(\Gamma^n_\ast) \to \mathcal{H}^1(L) = \lambda(E, s)$ as $n\to\infty$.
Hence proving that
\begin{align}\label{eq:assymptotic}
        n\delta_n \to 0\ \mathrm{as}\ n\to \infty,
\end{align}  
would show that $\mathcal{H}^1(S'_n) \to \lambda(E, s)$.
In essence, this result (Equation \ref{eq:assymptotic}) is true due to the fact that $x/\sqrt{x} \to 0$ as $x\to 0$.
We will explore this result in greater detail in the proof of Lemma~\ref{lem:linesegment}.
    
Extending points out in similar ways is also crucial for the proofs of Theorem~\ref{thm:main} and Proposition~\ref{prop:lambdaandsigma}.
However, more care has to be taken in these more complicated cases.
In the $C^1$ case, we partition our minimizer into a finite number of pieces where each piece is contained in some uniformly thin tube.
Because of this set up, we must not only extend our points out by $\delta_n$, but also by the width of our tubes.
In the case where the $s$-neighborhood minimizer $\Gamma$ is a finite continuum, even more care must be taken.
Using a classical result of Geometric Measure Theory which states that finite continua are characterized by Lipschitz curves, we prove the theorem for Lipschitz curves; that is, images of Lipschitz functions $\gamma: I\to \R^2$ for some compact interval $I\subset \R$. 
The main difficulty for the case of a Lipschitz curve $\Gamma = \gamma(I)$ is overcome in Lemma~\ref{lem:main}.
Since we lose the uniform thinness of our tubes and differentiability at {\it all} points, we partition $\Gamma$ into a good part and a bad part.
The good part of $\Gamma$ is around points of differentiability of $\gamma$, allowing us to construct the portions of $\Gamma_\ast$ around these points as in the $C^1$ case.
Since this set of differentiable points $G\subset I$ of $\gamma$ has full measure, we may pick a compact subset $K\subset G$ such that $\mathcal{L}^1(I \setminus K) < \xi$ (for $\xi > 0$ small). This tells us that the bad portions of $\Gamma$ are small in measure and allows us to be more liberal with our construction of $\Gamma_\ast$ around these bad portions.

\section{Preliminaries}
%%%%%%%%%%%%%%%%%%%%%%%%%%%%%%%%%%%%%%%%%%%%%
%%%%%%%%%%%%%%%%%%%%%%%%%%%%%%%%%%%%%%%%%%%%%
We collect in Table \ref{tab:notation} important notation used in the paper.
We formally define the main problems we study in Section \ref{sec:probStatements}, followed by standard definitions and classical results in Section \ref{sec:def}.
\begin{table}[htp!]
  \centering
  \caption{\label{tab:notation} Notation used in the paper, and their explanations.}
  \begin{tabular}{ll}
    \hline
    Notation & definition/interpretation \\
    \hline
    $\mathbf{cl}(A)$ & closure of subset $A$ of $\R^n$\\
    $U(x, r)$ & open ball of radius $r$ centered at $x \in \R^n$\\
    $U(A, r)$ & open $r$-neighborhood of $A\subset \R^n$ when $r > 0$\\
    $B(x, r)$ & closed ball of radius $r$ centered at $x \in \R^n$\\
    $B(A, r)$ & closed $r$-neighborhood of $A\subset \R^n$ when $r > 0$\\
    $\reg{card}(A)$ & cardinality of subset $A$ of $X$\\
    $\mathcal{H}^d$ & $d$-dimensional Hausdorff measure\\
    $\mathcal{L}^n$ & $n$-dimensional Lebesgue measure \\
    $\lambda(A, s)$ & $s$-maximum distance length of $A$ \\
    $\Lambda(A, s)$ & the set of all $s$-maximum distance minimizers of $A$\\
    $\sigma(A, s)$ & $s$-spanning length of $A$\\
    $\mathrm{Tan}(S, a)$ & tangent cone of $S$ at $a$\\
    $S(V; r, t)$ & Closed asymmetric strips perpendicular to subspace $V$\\
    $V_\sharp$ & the orthogonal projection from $\R^n$ to subspace $V$ of $\R^n$\\
    $V^\perp$ & the perpendicular subspace of $V$ for subspace $V$ of $\R^n$\\
    $S_X$ & A Steiner tree over a finite point set $X\subset \R^n$\\
    $T_X$ & A minimum spanning tree over a finite point set $X\subset \R^n$ \\
    \hline
  \end{tabular}
\end{table}

\subsection{Problem statements}
\label{sec:probStatements}

\begin{problem}[Maximum Distance Problem, MDP]\label{prob:FTSP}
	Given a compact $E\subset \R^2$ and $s > 0$, compute
	\begin{align*}
	\lambda(E, s) := \inf \{\mathcal{H}^1(K) : K\ \reg{is\ closed,\ connected\ and}\ E \subset B(K, s)\}.
	\end{align*}
	We call the number $\lambda(E, s)$ the {\it{$s$-maximum distance length of $E$}}, and a closed and connected $\Gamma \subset \R^2$ such that $B(\Gamma, s) \supset E$ and $\mathcal{H}^1(\Gamma) = \lambda(E, s)$ an $s$-maximum distance minimizer of $E$, or if it is clear from context, a minimizer of $E$, or simply, a minimizer.
\end{problem}

\begin{problem}[Steiner Problem]\label{prob:SP}
  Given a finite set of points $X = \{x_i\}_{i = 1}^N$ in $\R^n$,
  compute
  \begin{align}\label{eq:SPlength}
    \inf\{\mathcal{H}^1(S) : S\ \mathrm{is\ closed,\ connected\ and}\ X\subset S\}.
  \end{align}
  The minimizers of~(\ref{eq:SPlength}), which can be shown to exist,
  are known as {\bfseries Steiner trees} over $X$ and are denoted $S_X$.
\end{problem}

\begin{problem}[Minimum Spanning Tree Problem, MST]\label{prob:mst}
Given a finite set of points $X = \{x_i\}_{i = 1}^N \subset \R^n$, and the collection of closed line segments $\mathscr{L} = \{[x, y] :  x, y \in X\}$, compute 
\begin{align}\label{eq:mst}
\min \left\{\sum_{l \in \mathscr{E}} \mathcal{H}^1(l) : \mathscr{E}\subset \mathscr{L},\ \bigcup_{\mathscr{E}} l\ \mathrm{is\ connected}\ \mathrm{and\ contains}\ X\right\}.
\end{align}
A union $T_X := \cup_{\mathscr{E_\ast}} l$, where $\mathscr{E}_\ast$ is a minimizer of~(\ref{eq:mst}) is called a {\bfseries minimum spanning tree} over $X$.
\end{problem}

\begin{remark}
  Note that even though we are not minimizing over trees in Problems \ref{prob:SP} and \ref{prob:mst}, we automatically get minimizers that are trees.
  This result follows from the observation that any possible solution with loops can always be pruned to remove loops and get a strictly shorter connected set.
\end{remark}

\begin{remark}
  Computing~(\ref{eq:mst}) in problem~(\ref{prob:mst}) can be done in polynomial time, by solving the minimum spanning tree problem for a corresponding weighted, complete graph, $K = (V, E, w)$.
  To construct $K$, let vertex $v_i \in V = \{v_i\}_{i = 1}^N$ correspond to $x_i$, and give each edge $(i, j) \in E$ the weight $w_{ij}$ that is equal to the Euclidean distance between $x_i$ and $x_j$ in $\R^n$.
\end{remark}

\subsection{Definitions and classical theorems}
\label{sec:def}

We will start with some standard definitions found in geometric measure theory literature.
In Remark~\ref{rem:equivalence}, we emphasize an important fact---that the case of characterizing finite continua in $\R^n$ plays a very special role in geometric measure theory.
This is, in part, due to the strong nature that connectivity has on sets of finite one dimensional Hausdorff measure.

\begin{definition}\label{def:neighborhood}
  For $x \in \R^n$ and $r > 0$, we let $B(x, r)$ and $U(x, r)$ denote the {\bfseries closed $r$-ball} and {\bfseries open $r$-ball} of radius $r$ centered at $x$, respectively. Similarly, for any $A\subset \R^n$ we denote the {\bfseries closed $r$-neighborhood} of $A$ as $B(A, r)$, and the {\bfseries open $r$-neighborhood} of $A$ as $U(A, r)$, and define them to be
\begin{align}\label{def:closedNeigh}
  B(A, r) := \{x \in \R^n : \mathrm{dist}(x, A) \leq r\}\quad \mathrm{and}\quad U(A, r) := \{x \in \R^n : \mathrm{dist}(x, A) < r\}.
\end{align} 
Here, $\mathrm{dist}(x, A) := \inf\{|x - y| : y \in A\}$ where $|\cdot |$ denotes the standard Euclidean distance in $\R^n$. 
\end{definition}

\begin{definition}\label{def:length}
  A {\bfseries finite continuum} $\Gamma \subset\R^n$ is a compact, connected set whose $1$-dimensional Hausdorff measure $\Hd^1(\Gamma)$ is  finite.
\end{definition}

\begin{definition}\label{def:rectifiable-set}
  A {\bfseries $\boldsymbol{1}$-rectifiable set} $\Gamma \subset\R^n$ is any set with finite $\Hd^1$ measure contained in the union of a countable collection of images of Lipschitz functions $\gamma_i : \R\to\R^n$ and a set with $\Hd^1$-measure $0$:
\[  \Hd^1(\Gamma \setminus\bigcup_{i=1}^{\infty} \gamma_i(\R)) = 0\quad \text{ and }\quad \Hd^1(\Gamma) < +\infty. \]
\end{definition}

\begin{definition}
  A subset $\Gamma\subset \R^n$ is a {\bfseries Lipschitz curve} if it is the image of some Lipschitz function $\gamma : [a, b] \to \R^n$ for $-\infty < a < b < +\infty$.
  The {\bfseries length} of $\gamma$ is defined to be 
  \begin{align*}
    \mathrm{length}(\gamma) := \sup \sum_{i = 1}^m|\gamma(t_i) - \gamma(t_{i - 1})|
  \end{align*} 
  where the supremum is taken over all disections $a = t_0 \leq t_i \leq ... \leq t_m = b$ of $[a, b]$.
\end{definition}

\begin{remark}\label{rem:equivalence}
  There are several equivalent definitions of the family of subsets of $\R^n$ that comprise finite continua. We list three of them to show the intimate connections between these definitions:
  \begin{enumerate}
    \item $\{\Gamma\subset \R^n\ |\ \Gamma\ \text{is compact, connected, and}\ \Hd^1(\Gamma) < +\infty\}$ \hfill (Finite Continua)
    \item $\{\Gamma\subset \R^n\ |\ \Gamma\ \text{is compact, connected, 1-rectifiable}\}$ \hfill (Rectifiable Continua)
    \item $\{\Gamma\subset \R^n\ |\ \Gamma= \gamma([0,L])\ \text{for some Lipschitz map}\ \gamma:[0, L]\to\R^n\}$ \hfill (Lipschitz Curves)
  \end{enumerate}

  The equivalence follows from a classic geometric measure theory result which states that any compact, connected set $\Gamma$ with $\mathcal{H}^1(\Gamma) < +\infty$ is in fact, $1$-rectifiable, and a slightly more refined result, Theorem~\ref{thm:davidSemmes}, stated next.
  This theorem tells us that $\Gamma$ is the Lipschitz image of an interval $[0,L]$ such that $L < C \Hd^1(\Gamma)$ for a $C$ that does not depend on the set $\Gamma$.
  The proof of Theorem~\ref{thm:davidSemmes} is sketched in the book by David and Semmes~\cite[\S1.1]{david1993analysis}.
  Note also that the theorem implies that $\gamma$ is parameterized by arc-length, and therefore in the statement of the theorem, $L = \mathrm{length}(\gamma)$.
\end{remark}

\begin{thm}[Theorem 1.8 in \cite{david1993analysis}]\label{thm:davidSemmes}
  There is a constant $C = C(n)$ such that whenever $\Gamma \subset \R^n$ is compact, connected such that $\mathcal{H}^1(\Gamma) < +\infty$, there is a positive number $L$ and a Lipschitz function $\gamma : [0, L] \to \R^n$ such that $\Gamma = \gamma([0, L])$, $\mathcal{H}^1(\Gamma) \leq L \leq C\mathcal{H}^1(\Gamma)$, and $|\gamma'(x)| = 1$ almost everywhere on $[0, L]$.
\end{thm}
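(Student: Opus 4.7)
The plan is to construct $\gamma$ as an arc-length reparameterization of a uniform limit of Lipschitz parameterizations of increasing ``subtrees'' of $\Gamma$. The constant will be $C = 2$, arising from an Eulerian doubling argument on each subtree.

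The technical heart of the argument, and what I expect to be the main obstacle, is a quantitative arc-connectivity fact: any two points $x, y$ of $\Gamma$ can be joined by a simple rectifiable arc contained in $\Gamma$. This is not a purely topological statement, since a general compact connected set need not be arc-connected; the finiteness of $\mathcal{H}^1(\Gamma)$ is crucial. A standard route is to use that for almost every one-dimensional projection, the image of $\Gamma$ has Lebesgue measure controlled by $\mathcal{H}^1(\Gamma)$, together with Go\l{}\k{a}b's theorem on lower semicontinuity of $\mathcal{H}^1$ on compact connected sets, to extract, as a Hausdorff limit of approximating polygonal paths in a thin neighborhood of $\Gamma$, a rectifiable arc between $x$ and $y$ lying in $\Gamma$.

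Granting this, I would fix a countable dense set $\{p_n\}_{n \geq 1} \subset \Gamma$ and build inductively a nested sequence of ``subtrees'' $T_1 \subset T_2 \subset \cdots \subset \Gamma$ by setting $T_1 = \{p_1\}$ and, whenever $p_{k+1} \notin T_k$, $T_{k+1} = T_k \cup A_k$, where $A_k$ is a simple rectifiable arc in $\Gamma$ from $p_{k+1}$ to $T_k$, truncated at its first hit with $T_k$. Each $T_k$ is then a finite union of simple arcs meeting only at endpoints, hence a topological tree in $\Gamma$, and since $T_k \subset \Gamma$ one has $\mathcal{H}^1(T_k) \leq \mathcal{H}^1(\Gamma)$. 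Doubling each edge makes every vertex have even degree, so $T_k$ admits an Eulerian circuit. Parameterizing this circuit by arc length yields a $1$-Lipschitz map $\gamma_k : [0, L_k] \to \R^n$ with image $T_k$ and $L_k \leq 2\mathcal{H}^1(T_k) \leq 2\mathcal{H}^1(\Gamma)$.

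Finally, I would extend each $\gamma_k$ to $[0, 2\mathcal{H}^1(\Gamma)]$ by constant values past $L_k$, and apply the Arzelà-Ascoli theorem to extract a subsequence converging uniformly to a $1$-Lipschitz map $\gamma^* : [0, 2\mathcal{H}^1(\Gamma)] \to \R^n$. The image of $\gamma^*$ is closed in $\R^n$, contained in $\Gamma$ (since each $T_k \subset \Gamma$ and $\Gamma$ is closed), and contains every $p_n$, hence equals $\Gamma$ by density and compactness. Reparameterizing $\gamma^*$ by arc length on the interval where it is genuinely moving produces $\gamma : [0, L] \to \R^n$ with $|\gamma'| = 1$ almost everywhere and $L \leq 2\mathcal{H}^1(\Gamma)$; the lower bound $\mathcal{H}^1(\Gamma) \leq L$ follows because the $1$-Lipschitz surjection $\gamma$ cannot decrease one-dimensional Hausdorff measure.
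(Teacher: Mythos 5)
The paper does not actually prove this statement --- it is quoted as Theorem~1.8 of David and Semmes, with the proof only sketched in \cite[\S1.1]{david1993analysis} --- so there is no in-paper argument to compare against; your outline is precisely the standard proof of that theorem (essentially the argument in \cite[\S3.2]{falconer-1986-geometry} and \cite[\S4.4]{ambrosio-2004-topics}): arcwise connectivity of continua of finite $\Hd^1$-measure, an increasing family of finite subtrees through a countable dense set, Eulerian doubling giving the constant $C=2$, and an Arzel\`a--Ascoli limit followed by arc-length reparameterization. It is correct as a sketch; the one step that genuinely needs work is the arc-connectivity lemma, which you rightly identify as the crux (and for the limit stage you should pass to a diagonal subsequence so that the limit curve hits every $p_n$). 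One small correction: the final inequality $\Hd^1(\Gamma)\leq L$ holds because a $1$-Lipschitz map cannot \emph{increase} $\Hd^1$, i.e.\ $\Hd^1(\gamma([0,L]))\leq\Hd^1([0,L])=L$; your wording (``cannot decrease'') states the opposite and, read literally, would give the wrong direction.
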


% \begin{definition}\label{def:length}
% 	A {\bfseries curve} $\Gamma$ in $\R^n$ is the image of a continuous map $\gamma: [a, b] \to \R^n$, where $[a, b]\subset \R$ is a closed interval. If $\gamma \in C^k([a, b], \R^n)$, then we say that $\Gamma$ is a $C^{k}$-curve in $\R^n$. In the case that $\gamma$ is Lipschitz, we say that $\Gamma$ is a Lipschitz curve with Lipschitz constant $\reg{Lip}(\Gamma)$. The {\bfseries length} of a curve $\Gamma = \gamma([a, b])$ for $\gamma:[a, b]\to \R^n$ is defined as 
% 	\begin{align*}
% 	\mathrm{length}(\gamma) = \sup \sum_{i = 1}^m|\gamma(t_i) - \gamma(t_{i - 1})|
% 	\end{align*} 
% 	where the supremum is taken over all disections $a = t_0 \leq t_i \leq ... \leq t_m = b$ of $[a, b]$. If $\mathrm{length}(\gamma) < +\infty$, then $\Gamma$ is said to be {\bfseries rectifiable}.
% \end{definition}

%These facts will be relevant to some of the proofs in section \ref{sec:FTSPandSteiner}. 

\begin{definition}\label{def:totallybounded}
  Given $\epsilon > 0$, we say that $X\subset A\subset \R^n$ is an
  {\bfseries $\epsilon$-net} for $A$ if $A\subset B(X,\epsilon)$. If $X$ is finite, we say that $X$ is a finite $\epsilon$-net.
\end{definition}
%\begin{remark}
  %	First, note that if the metric space $X$ is a Euclidean space
  % and $d(x, y) := |x - y|$ is the $l_2$ Euclidean distance, then a
  % subset of $X$ endowed with the subspace topology is totally
  % bounded if and only if it is bounded
  % Second, for metric space $X$, $\reg{MinCov}(X, \epsilon) < \infty$
  % for any $\epsilon > 0$ if and only if $X$ is totally bounded. In
  % such a case, we say that an $\epsilon$-net, $Y$ with a finite
  % number of elements from $X$ is a finite $\epsilon$-net for $X$.
%\end{remark}

\begin{definition} \label{del:orthcplmt}
  Let $V$ be a $k$-dimensional linear plane in $\R^n$.
  We denote by $V^\perp$ the {\bfseries orthogonal complement} of $V$, and $V_\sharp : \R^n \to V$ the {\bfseries orthogonal projection} onto $V$.
  For $\alpha > 0$, we define the {\bfseries cone} of slope $\alpha$ with respect to $V$ to be 
  \begin{align*}
    C(V, \alpha) := \{x \in \R^n : |V_\sharp(x)| \leq \alpha |V^\perp_\sharp(x)|\}.
  \end{align*}
  For every $x \in \R^n$ we denote by $C(x, V, \alpha)$ the set $x + C(V, \alpha)$.
\end{definition}

In the special case where $V$ is a $1$-dimensional linear plane of $\R^2$ with a prescribed positive direction, in Lemma~\ref{lem:main} we will be intersecting the cone with the {\bfseries asymmetric closed strip}
\begin{align*}
  S(V; [a, b]) := \{x \in \R^n :  a \leq V_\sharp(x) \leq b\}.
\end{align*}
For $x \in \R^2$, we denote by $C(x, V, \alpha; [a, b])$ the set $x + [C(V, \alpha)\cap S(V; [a, b])]$.

\begin{definition}[\S3.1.21 in~\cite{federer-1969-1}]
  Whenever $S\subset \R^n$ and $a \in \R^n$, we define the {\bfseries tangent cone} of $S$ at $a$, denoted
  \begin{align*}
    \Tan(S, a),
  \end{align*}
  as the set of all $v \in \R^n$ such that for every $\epsilon > 0$, there exists 
  \begin{align*}
    x \in S,\ 0 < r\in \R\ \mathrm{with}\ |x - a| < \epsilon,\ |r(x - a) - v| < \epsilon;
  \end{align*}
  such vectors $v$ are called {\bfseries tangent vectors} of $S$ at $a$. 
\end{definition}

%%%%%%%%%%%%%%%%%%%%%%%%%%%%%%%%%%%%%%%%%%%%%
\subsection{Existence of Minimizers}
%%%%%%%%%%%%%%%%%%%%%%%%%%%%%%%%%%%%%%%%%%%%%
\label{sec:exist}	

Using compactness results for non-empty compact subsets of $\R^n$ in a bounded portion $B$ of $\R^n$ (Blaschke selection theorem) and lower-semicontinuity of $\mathcal{H}^1$ under Hausdorff convergence (Go\l{}\k{a}b's theorem), we show existence of minimizers of $\lambda(E, s)$ for compact $E\subset \R^n$ and $s > 0$.
One can find proofs of the above theorems in~\cite[\S3.2]{falconer-1986-geometry} for the case of $\R^n$, or in~\cite[\S4.4]{ambrosio-2004-topics} for general compact metric spaces.
For every $A, B\subset \R^n$ we define the {\bfseries Hausdorff distance} between $A$ and $B$ to be
\begin{align*}
  d_H(A, B) := \inf\{r \in [0, +\infty] : B\subset B(A, r), \mathrm{and}\ A\subset B(B, r)\}.
\end{align*}

\begin{thm}[Existence]\label{thm:existence}
  For a compact subset $E$ of $\R^n$ and $s > 0$, minimizers of $\lambda(E, s)$ exist.
  That is, there exists a compact and connected $\Gamma$ such that $B(\Gamma, s) \supset E$ and $\mathcal{H}^1(\Gamma) = \lambda(E, s)$. 
\end{thm}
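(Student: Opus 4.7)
The plan is the standard direct method in the calculus of variations: extract a Hausdorff-convergent subsequence from a minimizing sequence and argue that the limit is an admissible competitor of the same length. The two key ingredients, both cited in the paragraph preceding the theorem, are the Blaschke selection theorem (compactness of closed subsets of a fixed ball under Hausdorff distance) and Go\l{}\k{a}b's theorem (lower semicontinuity of $\mathcal{H}^1$ along Hausdorff-convergent sequences of continua).

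First I would establish that $\lambda(E, s) < +\infty$: since $E$ is compact, it admits a finite $s$-net $X \subset E$, and the corresponding minimum spanning tree $T_X$ is a finite continuum with $E \subset B(X, s) \subset B(T_X, s)$. Fix a minimizing sequence $\{K_i\}$ of finite continua with $E \subset B(K_i, s)$ and $\mathcal{H}^1(K_i) \to \lambda(E, s)$; after discarding finitely many terms we may assume $\mathcal{H}^1(K_i) \leq M := \lambda(E, s) + 1$. A standard projection argument (project $K_i$ onto the line through any two of its points and use connectedness together with the fact that orthogonal projection is $1$-Lipschitz) shows that any connected set $K \subset \R^n$ satisfies $\mathrm{diam}(K) \leq \mathcal{H}^1(K)$, so $\mathrm{diam}(K_i) \leq M$. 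The covering condition forces $K_i \cap B(E, s) \neq \emptyset$, so each $K_i$ is contained in $B(E, s + M)$; since $E$ is bounded, this fits inside a fixed closed ball $B(0, R)$ with $R$ independent of $i$.

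By the Blaschke selection theorem applied inside $B(0, R)$, a subsequence (not relabeled) converges in Hausdorff distance to some non-empty compact $\Gamma \subset B(0, R)$; connectedness of compact sets is preserved under Hausdorff limits in a compact ambient, so $\Gamma$ is connected. To see $E \subset B(\Gamma, s)$, fix $x \in E$: for each $i$ there is $y_i \in K_i$ with $|x - y_i| \leq s$, and Hausdorff convergence produces $z_i \in \Gamma$ with $|y_i - z_i| \to 0$, hence $\mathrm{dist}(x, \Gamma) \leq s$ by compactness of $\Gamma$. Finally, Go\l{}\k{a}b's theorem yields
\begin{equation*}
  \mathcal{H}^1(\Gamma) \leq \liminf_{i \to \infty} \mathcal{H}^1(K_i) = \lambda(E, s),
\end{equation*}
and the admissibility of $\Gamma$ gives the reverse inequality, so $\Gamma$ is a minimizer.

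The only step that requires any real care is the uniform confinement to a common ball $B(0, R)$, which makes Blaschke applicable; this is the reason we must exploit the diameter-versus-length inequality for connected sets to trade the length bound along the minimizing sequence for a uniform diameter bound. Everything else is a direct invocation of the two classical theorems together with the elementary stability of ``$\mathrm{dist}(x, \cdot) \leq s$'' under Hausdorff convergence.
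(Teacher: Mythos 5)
Your proof is correct and follows essentially the same route as the paper's: minimizing sequence, confinement to a fixed ball, Blaschke selection for Hausdorff compactness, Go\l{}\k{a}b's theorem for lower semicontinuity and connectedness of the limit, and stability of the covering condition under Hausdorff convergence. The only (inessential) difference is how you confine the $K_i$ to a common ball --- you use the inequality $\mathrm{diam}(K_i) \leq \mathcal{H}^1(K_i)$ together with $K_i \cap B(E,s) \neq \emptyset$, whereas the paper replaces each $K_i$ by its $1$-Lipschitz projection onto a large closed ball containing $B(E,2s)$; both are valid.
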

\begin{proof}
  Let $\{K_i\}_{i = 1}^\infty$ be a minimizing sequence; that is, for any $j = 1, 2, ...$, $K_j$ is closed, connected,  $\B(K_j, s) \supset E$, and $\lim_{i \to \infty} \mathcal{H}^1(K_i) = \lambda(E, s)$.
  Since we assume $E$ is compact, $E$ lies inside a large enough ball $\B(0, R - 2s)$ for some $R > 0$.
  We may then assume that each $K_j$ in our sequence is a subset of $\B(0, R)$, since if it were not, then projecting $K_j$ radially onto $\partial \B(0, R)$ would decrease the $\mathcal{H}^1$-measure of $K$ and $\B(K_j, s)$ would still contain $E$.
  Hence, by the Blaschke selection theorem~\cite[\S3.4]{falconer-1986-geometry}, there exists a subsequence $\{K_{i_j}\}_{j = 1}^\infty$ and a compact set $\Gamma \subset \R^n$ such that $K_{i_j}$ converges to $\Gamma$ under the Hausdorff metric as $j \to \infty$.
  Therefore, since each $K_{i_j}$ is connected, by Go\l{}\k{a}b's Theorem~\cite[\S3.2]{falconer-1986-geometry}, we have that
  \begin{align*}
    \mathcal{H}^1(\Gamma) \leq \lim_{j \to \infty}\mathcal{H}^1(K_{i_j})
  \end{align*}
  and that $\Gamma$ is connected.
  To conclude, since $K_{i_j}$ converges under the Hausdorff metric to $\Gamma$, we also have that $\B(K_{i_j}, s)$ converges to $\B(\Gamma, s)$ under the Hausdorff metric, and hence the closed set $\B(\Gamma, s)$ also contains $E$.
  Therefore $\Gamma$ is closed, connected,  $\B(\Gamma, s) \supset E$ and  $\mathcal{H}^1(\Gamma) = \lambda(E, s)$; meaning that  $\Gamma$ is a minimizer of $\lambda(E, s)$.
\end{proof}

%%%%%%%%%%%%%%%%%%%%%%%%%%%%%%%%%%%%%%%%%%%%%
\section{Minimizing over Minimum Spanning Trees solves the Maximum Distance Problem}\label{sec:FTSPandSteiner}
%%%%%%%%%%%%%%%%%%%%%%%%%%%%%%%%%%%%%%%%%%%%%

In this section, we will prove our main results: Lemma~\ref{lem:main} and Theorem~\ref{thm:main}.
We believe the intricacies that come from working with Lipschitz curves can cloud the key instincts underlying the proof, so we prove the main result for (1) line segments and then (2) for $C^1$ curves, before moving on to (3) the main theorem that obtains the same result for finite continua.

Before we treat these three cases in detail, we establish some weaker results which are easier to get due to the fact that we allow ourselves wiggle room in the distance $s$, i.e., we look at $s+\epsilon$ neighborhoods of $\Gamma$.

\subsection{($s+\epsilon$)-Neighborhoods of Steiner Trees}
\label{sec:loose-steiner}

Although we show that $\lambda(E, s) = \sigma(E, s)$ for the cases when minimizers are Lipschitz curves, we have a weaker relationship with $\lambda(E, \epsilon)$ in Proposition \ref{prop:lambdaandsigma}, where the following Lemma \ref{lem:lambdaboundssigma} becomes crucial.
	
\begin{lem}\label{lem:lambdaboundssigma}
  Let $E\subset \R^n$ be compact. If $0 < s < t$ then $\lambda(E, s) \geq \sigma(E, t)$.
\end{lem}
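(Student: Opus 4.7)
The plan is to produce, from a minimizer of $\lambda(E,s)$, an admissible competitor for $\sigma(E,t)$ whose $\mathcal{H}^1$-measure is no larger. The extra room $\epsilon := t - s > 0$ is exactly what lets us discretize a continuum into a finite point set without losing containment of $E$.

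First, by Theorem \ref{thm:existence}, pick a minimizer $K$ of $\lambda(E,s)$, so $K$ is a finite continuum with $E \subset B(K,s)$ and $\mathcal{H}^1(K) = \lambda(E,s)$. Since $K$ is compact it is totally bounded, so it admits a finite $\epsilon$-net $X := \{x_1,\dots,x_N\} \subset K$ in the sense of Definition \ref{def:totallybounded}; that is, $K \subset B(X,\epsilon)$. Then
\begin{equation*}
E \;\subset\; B(K,s) \;\subset\; B\bigl(B(X,\epsilon),\,s\bigr) \;=\; B(X,\,s+\epsilon) \;=\; B(X,t),
\end{equation*}
so $X$ is admissible in the definition of $\sigma(E,t)$.

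Next, observe that $K$ itself is a closed connected set containing $X$, so $K$ is a competitor in Problem \ref{prob:SP} for $X$. Hence any Steiner tree $S_X$ satisfies
\begin{equation*}
\mathcal{H}^1(S_X) \;\leq\; \mathcal{H}^1(K) \;=\; \lambda(E,s).
\end{equation*}
Combining with admissibility gives $\sigma(E,t) \leq \mathcal{H}^1(S_X) \leq \lambda(E,s)$, which is the desired inequality.

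There is no serious obstacle; the proof is a one-line application of total boundedness plus the defining property of the Steiner infimum. The only subtle point is conceptual, namely that one must not try to compare $\sigma$ and $\lambda$ at the same scale $s$ (where discretization inevitably costs length, as Figure \ref{fig:LineSegmentCase} illustrates): the extra slack $t - s$ absorbs the distance from $X$ to $K$ so that no length-increasing ``extension'' of the point set, of the kind needed later in Lemma \ref{lem:main}, is required.
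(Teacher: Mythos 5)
Your proof is correct and follows essentially the same route as the paper's: take a minimizer $K$ of $\lambda(E,s)$, extract a finite $(t-s)$-net $X\subset K$, note $E\subset B(X,t)$ via the triangle inequality, and use $K$ itself as a competitor bounding $\mathcal{H}^1(S_X)$ from above. The only cosmetic difference is that the paper spells out the triangle-inequality step pointwise rather than writing $B(B(X,\epsilon),s)\subset B(X,s+\epsilon)$.
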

\begin{proof}
  First, let $\epsilon = s$ and let $\delta > 0 $ such that $\epsilon + \delta = t$; we will instead show that  $\lambda(E, \epsilon) \geq \sigma(E, \epsilon + \delta)$.
  By Theorem~\ref{thm:existence}, there exists a minimizer $\Gamma$ of $\lambda(E, \epsilon)$ that is compact, connected, and $\mathcal{H}^1(\Gamma) < +\infty$.
  Since $\Gamma$ is compact, there exists a finite $\delta$-net, $X\subset \Gamma$ of $\Gamma$.
  Recall, this  means that for any $a \in \Gamma$ there exists $b \in X$  such that $|a - b| < \delta$.
		
  Now, if we are able to show that for any $x \in \B(\Gamma, \epsilon)$ there exists a $y \in X$ such  that $|x - y| < \epsilon + \delta$, we would guarantee that  $\B(X, \epsilon + \delta) \supset \B(\Gamma, \epsilon)$.
  And since $\Gamma$ is a minimizer of $\lambda(E, \epsilon)$, $\B(\Gamma, \epsilon) \supset E$, we would then be able to say that $\B(X, \epsilon + \delta) \supset E$.
  Thus by picking a Steiner tree $S_X$ over $X$, since $X$ was originally picked to be contained in $\Gamma$, we would have that $\mathcal{H}^1(S_X) \leq \mathcal{H}^1(\Gamma)$ and therefore $\sigma(E, \epsilon + \delta) \leq \mathcal{H}^1(S_X) \leq \mathcal{H}^1(\Gamma) = \lambda(E, \epsilon)$.
		
  Let us now show this is indeed the case.
  Let $x \in \B(\Gamma, \epsilon)$.
  Since $\Gamma$ is closed, there exists a $z \in \Gamma$ such that $|x - z| \leq \epsilon$.
  Since $X$ is a $\delta$-net over $\Gamma$, we know there exists $y \in X$ such that $|y - z| < \delta$.
  Therefore by the triangle inequality, $|x - y| \leq |x - z| + |z - y| < \epsilon + \delta$.
\end{proof}

\begin{proposition}\label{prop:lambdaandsigma}
  Let $E\subset \R^2$ be compact and consider a positive sequence  $\delta_i \to 0$ as $i \to \infty$.
  There exists a sequence of finite point sets $X_i$ and Steiner trees $S_{X_i}$ such that
  \begin{align*}
    &E\subset B(X_i, \epsilon + \delta_i)\\
    &S_i \overset{H}{\to} S^* \text{ and }\\
    &\lim_{i\to \infty}\mathcal{H}^1(S_{X_i}) = \mathcal{H}^1(S^*) = \lambda(E, \epsilon).
  \end{align*} 
\end{proposition}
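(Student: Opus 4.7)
The plan is to combine a diagonal argument with the Blaschke selection theorem and Go\l{}\k{a}b's theorem, very much in the spirit of the proof of Theorem~\ref{thm:existence}, but with the additional twist that the competitors in the $i$-th stage realize the infimum $\sigma(E, \epsilon + \delta_i)$ rather than $\lambda(E, \epsilon)$ itself. The slack $\delta_i$ in the neighborhood radius is precisely what makes this easier than Lemma~\ref{lem:main}: we never have to build $\Gamma_\ast$ explicitly, only extract a limit.

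First I would set $\sigma_i := \sigma(E, \epsilon + \delta_i)$ and invoke Lemma~\ref{lem:lambdaboundssigma} to obtain $\sigma_i \leq \lambda(E, \epsilon)$ for all $i$. For each fixed $i$, a minimizing sequence for $\sigma_i$ together with a diagonal choice yields a finite set $X_i$ with $E \subset B(X_i, \epsilon + \delta_i)$ whose Steiner tree $S_{X_i}$ satisfies
\[
\mathcal{H}^1(S_{X_i}) \leq \sigma_i + \tfrac{1}{i} \leq \lambda(E, \epsilon) + \tfrac{1}{i}.
\]
Since $E$ is compact, I would fix a ball $B(0, R)$ containing $B(E, 2\epsilon)$ and assume $X_i \subset B(0, R)$: any point of $X_i$ outside this ball may be replaced by its radial projection onto $\partial B(0, R)$, which does not increase $\mathcal{H}^1(S_{X_i})$ and preserves the cover condition, exactly as in the proof of Theorem~\ref{thm:existence}. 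Blaschke then furnishes a subsequence (re-indexed and still denoted $S_{X_i}$) converging in the Hausdorff metric to some compact $S^\ast \subset B(0, R)$, and Go\l{}\k{a}b gives the connectedness of $S^\ast$ together with
\[
\mathcal{H}^1(S^\ast) \leq \liminf_{i \to \infty} \mathcal{H}^1(S_{X_i}) \leq \lambda(E, \epsilon).
\]

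The one step that needs real care is checking that the cover condition survives in the limit, i.e.\ that $B(S^\ast, \epsilon) \supset E$. Given $x \in E$ and $\eta > 0$, for all sufficiently large $i$ there exists $y_i \in S_{X_i}$ with $|x - y_i| \leq \epsilon + \delta_i$, and Hausdorff convergence supplies $z_i \in S^\ast$ with $|y_i - z_i| \leq \eta$; since $\delta_i \to 0$, the triangle inequality then yields $\mathrm{dist}(x, S^\ast) \leq \epsilon + 2\eta$, and letting $\eta \to 0$ shows $x \in B(S^\ast, \epsilon)$. Hence $S^\ast$ is a valid competitor for $\lambda(E, \epsilon)$, so $\mathcal{H}^1(S^\ast) \geq \lambda(E, \epsilon)$; combined with the Go\l{}\k{a}b bound this forces $\mathcal{H}^1(S^\ast) = \lambda(E, \epsilon)$ and $\lim_i \mathcal{H}^1(S_{X_i}) = \lambda(E, \epsilon)$, finishing the proof. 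The main obstacle I anticipate is book-keeping: orchestrating the diagonal choice of $X_i$, the passage to a Hausdorff-convergent subsequence, and the simultaneous application of Go\l{}\k{a}b and the limit of the cover condition, so that the three displayed conclusions of the proposition all drop out at once.
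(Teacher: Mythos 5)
Your proposal is correct and follows essentially the same route as the paper's own proof: bound $\sigma(E,\epsilon+\delta_i)$ by $\lambda(E,\epsilon)$ via Lemma~\ref{lem:lambdaboundssigma}, extract near-minimizers $S_{X_i}$, apply Blaschke and Go\l{}\k{a}b to get a connected Hausdorff limit $S^*$ with $\mathcal{H}^1(S^*)\leq\lambda(E,\epsilon)$, and close the loop by showing $E\subset B(S^*,\epsilon)$. The only difference is cosmetic (slack $1/i$ versus $\delta_i$ in the choice of $X_i$), and you usefully spell out the limit-of-the-cover-condition step that the paper dispatches with ``a little bit of work.''
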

	
\begin{proof} We break the argument into steps:
  \begin{enumerate}
    \item Define $\sigma(s) := \sigma(E, s)$ and $\lambda(s) := \lambda(E, s)$.
    \item Because $\delta_i > 0$ for all $i$, Lemma \ref{lem:lambdaboundssigma} implies that $\sigma(\epsilon + \delta_i) \leq \lambda(\epsilon)$ for all $i$.
    \item We can therefore find $X_i$ such that
      \begin{enumerate}
        \item  $ \Hd^1(S_{X_i}) \leq \lambda(\epsilon) + \delta_i$\and{, and}
        \item  $E\subset B(S_{X_i},\epsilon+\delta_i)$.
      \end{enumerate}
    \item Recalling the argument in the Theorem~\ref{thm:existence}, since each $S_{X_i}$ is closed and connected in a compact metric space $(\B(0, R), ||\cdot||_2)$ there exists a closed and connected set $S^*$ and a subsequence such that $S_{X_{i(k)}} \overset{H}{\to} S^*$.
      Recall that $H$ denotes the Hausdorff metric here.
    \item \label{step:boundS*} By the lower semicontinuity of $\Hd^1$, we can conclude that
      \begin{align*}
        \mathcal{H}^1(S^*) & \leq \liminf_{k\to \infty}\mathcal{H}^1(S_{X_{i(k)}})\\
                           &  \leq \lambda(\epsilon).
      \end{align*}
    \item But we also know that  $E\subset \bigcap_i B(S_{X_{i(k)}},\epsilon+\delta_{i(k)})$, which implies (with a little bit of work) that  $E \subset  B(S^*,\epsilon)$.
    \item This in turn implies that $\mathcal{H}^1(S^*) \geq \lambda(\epsilon)$, which, together with Step~\ref{step:boundS*}, implies that $\mathcal{H}^1(S^*) = \lambda(\epsilon)$.
  \end{enumerate}
\end{proof}

\subsection{Case I: Line Segments}
\label{sec:segment}

\begin{lem}\label{lem:linesegment}
  Let $s > 0$, and $[x, y] \subset \R^2$ be the finite line segment of length $L := |x - y|$ with endpoints $x, y \in \R^2$.
  For $E = \B([x, y], s)$
  \begin{align*}
    \sigma(E, s) = \lambda(E, s).
  \end{align*}
\end{lem}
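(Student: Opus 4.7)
The plan is to prove the two inequalities separately. The easy direction $\lambda(E,s) \le \sigma(E,s)$ is immediate: any Steiner tree $S_X$ with $B(X,s) \supset E$ is a finite continuum containing $X$, so $B(S_X,s) \supset B(X,s) \supset E$, making $S_X$ an admissible competitor for $\lambda$.

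For the substantive direction I would first identify the target value. Writing $e$ for the unit vector from $x$ to $y$, the segment $[x,y]$ itself is admissible, so $\lambda(E,s) \le L$. Conversely, $E$ contains the two extreme points $x' = x - s e$ and $y' = y + s e$, so any admissible connected $K$ must meet both $B(x',s)$ and $B(y',s)$. Thus $\mathrm{diam}(K) \ge |x'-y'| - 2s = L$, and since a connected set in $\R^2$ satisfies $\mathcal{H}^1(K) \ge \mathrm{diam}(K)$, we get $\lambda(E,s) = L$.

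To bound $\sigma(E,s) \le L$, I would construct, for each large integer $n$, an explicit point set $X_n$ with $B(X_n,s) \supset E$ together with a connecting finite continuum $\Gamma^n_\ast \supset X_n$ whose length tends to $L$. Place $n$ centers uniformly along $[x,y]$, replace each by a pair of offsets $\pm\delta_n e^{\perp}$ perpendicular to the axis, and add two additional centers at the axial offsets $x - \delta_n e$ and $y + \delta_n e$ past the endpoints, which take care of the semicircular caps of $E$. Take $\Gamma^n_\ast$ to be $[x,y]$ together with the $2n+2$ short segments of length $\delta_n$ that join each on-axis basepoint to its offset; then
\[
\mathcal{H}^1(\Gamma^n_\ast) \;=\; L + (2n+2)\,\delta_n.
\]
Since $\Gamma^n_\ast$ is a finite continuum containing $X_n$, any Steiner tree $S_{X_n}$ satisfies $\mathcal{H}^1(S_{X_n}) \le \mathcal{H}^1(\Gamma^n_\ast)$.

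The heart of the argument is the quantitative choice of $\delta_n$. The binding coverage constraint is the midpoint between two consecutive upper offsets, at horizontal spacing $L/n$ and vertical height $\delta_n$: it must lie within $s$ of one of them, forcing $(L/(2n))^2 + (s-\delta_n)^2 \le s^2$, equivalently $(L/(2n))^2 \le 2s\delta_n - \delta_n^2$. Choosing $\delta_n$ to saturate this gives $\delta_n = \Theta(1/n^2)$, so $n\delta_n = \Theta(\sqrt{\delta_n}) \to 0$ — this is precisely the observation $x/\sqrt{x} \to 0$ flagged in the introduction. Hence $\mathcal{H}^1(S_{X_n}) \le L + (2n+2)\delta_n \to L = \lambda(E,s)$, proving $\sigma(E,s) \le \lambda(E,s)$. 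The only step requiring real care is the elementary geometric verification that, with this $\delta_n$, the set $B(X_n,s)$ actually covers all of $E$ (including the caps), and I expect this bookkeeping to be the single nontrivial obstacle — but nothing deeper, since the uniform thinness of the tube around a line segment lets one do the estimate in closed form. This is the warm-up that motivates the more delicate offset-and-cover constructions in Proposition~\ref{prop:lambdaandsigma} and Lemma~\ref{lem:main}.
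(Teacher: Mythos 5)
Your proposal is correct and follows essentially the same route as the paper: place $\sim n$ equally spaced centers on the segment, push them off-axis by $\delta_n$ (plus two axial extensions for the caps), connect with the segment plus $O(n)$ prongs of length $\delta_n$, and observe that the coverage constraint $(L/(2n))^2 \leq 2s\delta_n - \delta_n^2$ permits $\delta_n = \Theta(n^{-2})$, so the excess length $O(n\delta_n) \to 0$. The only difference is that you justify $\lambda(E,s) = L$ explicitly via the diameter lower bound $\mathcal{H}^1(K) \geq \mathrm{diam}(K)$ for connected sets, a point the paper simply asserts.
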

\begin{proof}
  Let $[x, y]\subset \R^2$ be a line segment of length $L$.
  If $E = \B([x, y], s)$, then $[x, y]$ is a minimal length curve for $E$ and $s$.
  Without loss of generality, we may assume that $[x, y]$ is the line segment $[0, L] \subset \{(x, 0) : x \in \R\}$, where we overload the notation $L$ to represent the point $(L,0)$.
	
  For each $n \in \mathbb{N}$, we may dissect $[0, L]$ into $n$ line segments, each of length $L/n$ having endpoints $x_k = kL/n$ for $k = 0, \dots, n$.
  For each $n$, we will construct a closed and connected set $\Gamma_n = [0,L] \, \cup \, P_n$, where $P_n$ consists of what we call {\it prongs}, such that $\Gamma_n$ connects a finite point set $X_n$ with $B(X_n, s) \supset E$.
  This finite point set will consist of $2n + 4$ points, which will be obtained by ``extending'' each $x_k$ ``upward'' and ``downward'', and extending the two end points ``outward'', as shown in Figure~\ref{fig:LineSegmentCase}.
  Since any Steiner tree $S_n := S_{X_n}$ over $X_n$ will, by its very definition, satisfy $\mathcal{H}^1(S_n) \leq \mathcal{H}^1(\Gamma_n)$, if we can show that $\mathcal{H}^1(\Gamma_n) \to L$ as $n\to \infty$, this will imply that we also have $\mathcal{H}^1(S_n) \to L$ as $n\to \infty$.
  Note that we also know that $\mathcal{H}^1(S_n) \geq L$ since $X_n$ will always contain the two endpoints $0$ and $L$.
  This gives us a sequence of Steiner trees which converge to the minimal length.
  In order to show that $\sigma(E, s) = \lambda(E, s)$, we must also know that the $s$-neighborhood of these Steiner trees contain $E$.
	
  Let us construct $\Gamma_n$.
  For $\delta_n > 0$ (to be picked later), and for each $x_k$ $(k = 0, 1, ..., n)$ pick the two points that are $\delta_n$ distance ``above'' and ``below'' $x_i$.
  In other words, let 
  \begin{align*}
    y^i := (x_i, \delta_n) \quad \mathrm{and} \quad y_i := (x_i, -\delta_n).
  \end{align*}
  Now, let $[-\delta_{n}, 0] := \{(x, 0) : -\delta_{n} \leq x \leq 0\}$ and $[L, L + \delta_{n}] := \{(x, 0) : L \leq x \leq L + \delta_{n}\}$ denote the two horizontal line segments, each of length $\delta_{n}$.
  We can now construct,
  \begin{align*}
    \Gamma_n = ([-\delta_{n}, 0]\cup [0, L]\cup [L, L + \delta_{n}]) \,\bigcup \, \left( \mathop{\cup}_{k=0}^n \,[y^k, y_k] \right).
  \end{align*} 
  Note that if $x, y \in \R^2$, $[x, y]$ is simply the closed line segment connecting $x$ and $y$.
  Therefore, $\cup_{k=0}^n \, [y^k, y_k]$ consists of $(n+1)$ vertical line segments of length $2\delta_{n}$. 
	
  Denoting the set of points $\{(-\delta_{n}, 0), (L + \delta_{n}, 0)\}\cup \{y^i, y_i\}_{i = 0}^n$ by $X_n$, we must find $\delta_n > 0$ such that $\B(X_n, s) \supset E$.
  To do this, first notice that
  \begin{align*}
    -\sqrt{s^2 - x^2} \leq \, \frac{x^2}{s} - s \quad \text{on} \quad[-s, s]
  \end{align*}
  and therefore if we let
  \begin{align*}
    \delta_n = \frac{(L/2n)^2}{s},
  \end{align*}
  and pick $n \in \mathbb{N}$ large enough so that $\delta_n < s - \delta_n$, then $\B(X_n, s) \supset E$ (see Figure~\ref{fig:LineSegmentCaseTotal}).
  Now,
  \begin{align*}
    \mathcal{H}^1(\Gamma_n) &= 2(n + 1)\delta_n + 2\delta_n + L\\
    &= 2n\delta_n + 4\delta_n + L\\
    &= 2(n + 2)\frac{L^2}{4s n^2} + L\\
    &\leq C'/n + L
  \end{align*}
  for $C'$ independent of $n$. Therefore $\mathcal{H}^1(\Gamma_n) \to L$ as $n \to \infty$.
\end{proof}

\begin{figure}[htp!]
  \centering \includegraphics[width = 1\linewidth]{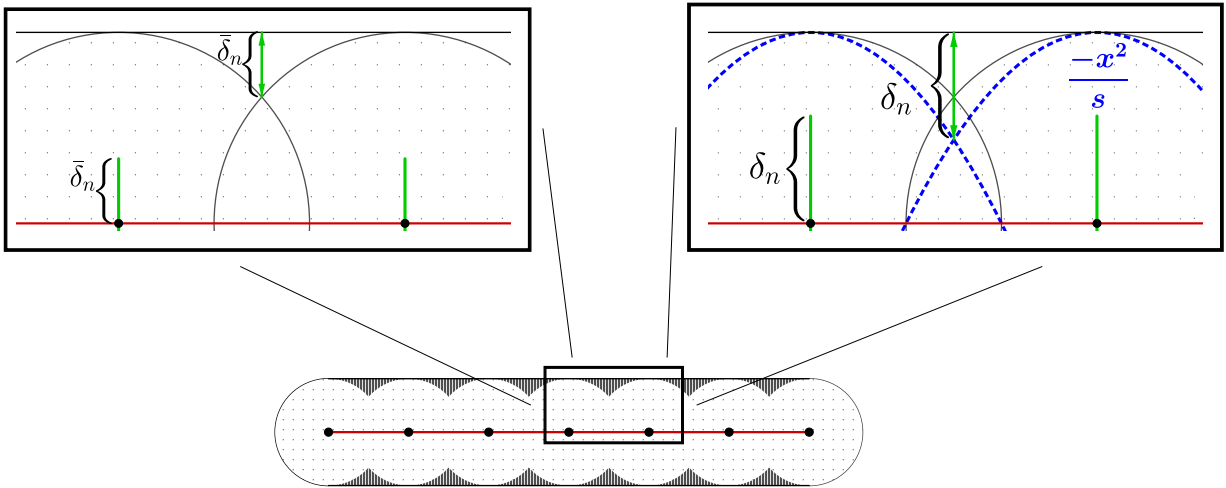}
  \caption{  \label{fig:LineSegmentCaseTotal}
    If we want to cover $E = \B([x, y], s)$ with balls centered on the $2n + 2$ points, we must raise and lower the balls by $\bar{\delta}_{n}$, as is shown on the top-left blown-up picture.
    However, it suffices to extend the balls up by a little more, $\delta_n$, as is shown on the top-right blown-up picture.
    In order to guarantee that raising these balls ``upwards'' and ``downwards'' will not expose the center line, we must choose $\delta_n$ small enough so that $\delta_n < s - \delta_n$.
    Note that we do not extend $(-\delta_n, 0)$ and $(L + \delta_n, 0)$ outwards as in the most general case we consider in this paper.
  }
\end{figure}

\subsection{Case II: $C^1$ Curves}
\label{sec:C1-curves}

\begin{proposition}
  For $E \subset \R^2$ compact and $s > 0$, if $\Gamma \in \Lambda(E, s)$, the set of all $s$-minimizers of $E$, is a $C^1$-curve then
  \begin{align*}
    \sigma(E, s) = \lambda(E, s).
  \end{align*}
\end{proposition}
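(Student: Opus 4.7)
The plan is to adapt Lemma~\ref{lem:linesegment} by decomposing the $C^1$ minimizer $\Gamma$ into finitely many nearly-straight arcs, each contained in a thin rectangular tube, and then running the prong construction on every arc. The inequality $\sigma(E,s)\geq\lambda(E,s)$ is automatic, since any Steiner tree $S_X$ over a finite $X$ with $B(X,s)\supset E$ is itself a finite continuum competing in the definition of $\lambda(E,s)$. Thus it suffices to produce, for each $\epsilon>0$, a finite set $X\subset\R^2$ with $B(X,s)\supset E$ and a Lipschitz curve $\Gamma^*\supset X$ satisfying $\Hd^1(\Gamma^*)\leq \Hd^1(\Gamma)+\epsilon$; any Steiner tree over such $X$ will then be no longer than $\Gamma^*$.

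First I would parametrize $\Gamma$ by arclength as $\gamma:[0,L]\to\R^2$, with $|\gamma'|\equiv 1$ and $L=\Hd^1(\Gamma)$. Using uniform continuity of $\gamma'$ on $[0,L]$ with modulus of continuity $\omega$, for any large integer $M$ I would choose the uniform partition $t_j=jL/M$. On each subinterval the tangent turns by at most $\omega(L/M)$, which forces each arc $\Gamma_j:=\gamma([t_{j-1},t_j])$ to lie inside a tube of width $\eta_M:=(L/M)\,\omega(L/M)$ centered on the chord from $\gamma(t_{j-1})$ to $\gamma(t_j)$. Because consecutive chords share the endpoint $\gamma(t_j)$, no bridging segments between tubes are needed.

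Next, on each chord I would run the construction from Lemma~\ref{lem:linesegment}: for a spacing parameter $h>0$, place $n_j\approx \lceil \ell_j/h\rceil$ equally spaced points along the chord (here $\ell_j\leq L/M$ is the chord length), and attach to each point a perpendicular prong of total length $2\delta$ with $\delta:=\eta_M/2+h^2/(8s)$. A sagitta estimate identical in spirit to the one in Lemma~\ref{lem:linesegment} shows that the $s$-balls around the prong tips cover $B(\Gamma_j,s)$: the $\eta_M/2$ contribution absorbs the transverse offset of $\Gamma_j$ from its chord, and the $h^2/(8s)$ contribution absorbs the discretization gap along the chord. Adding the short outward extensions at $\gamma(0)$ and $\gamma(L)$ exactly as in Lemma~\ref{lem:linesegment} yields a Lipschitz curve $\Gamma^*$ through a finite point set $X$ with $B(X,s)\supset B(\Gamma,s)\supset E$.

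The main obstacle is choosing $M$ and $h$ so that the excess length tends to zero. The chord portion of $\Gamma^*$ has total length at most $L$, and the prong portion is bounded by $\sum_j 2n_j\delta\leq 2\delta(L/h+M)$, giving
\[ \Hd^1(\Gamma^*)\;\leq\; L\;+\;\frac{L\eta_M}{h}\;+\;\frac{Lh}{4s}\;+\;M\eta_M\;+\;\frac{Mh^2}{4s}\;+\;O(\delta), \]
where the $O(\delta)$ term comes from the endpoint extensions. Setting $h:=\sqrt{\eta_M}$ makes the first two error terms $O(L\sqrt{\eta_M})$ and the last error term $L\omega(L/M)/(4s)$, while $M\eta_M=L\omega(L/M)$ by construction. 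Since $\gamma$ is $C^1$ on a compact interval, $\omega(L/M)\to 0$ as $M\to\infty$, so every error term vanishes and $\Hd^1(\Gamma^*)\to L$. Therefore $\sigma(E,s)\leq \Hd^1(S_X)\leq \Hd^1(\Gamma^*)\to L=\lambda(E,s)$, which combined with $\sigma(E,s)\geq \lambda(E,s)$ gives the desired equality.
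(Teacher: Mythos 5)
Your proposal is correct and follows essentially the same route as the paper's proof: partition the $C^1$ minimizer into nearly straight arcs lying in thin tubes over their chords and run the prong construction of Lemma~\ref{lem:linesegment} on each chord, with the error controlled by tube thinness plus a sagitta term; the paper parametrizes this by an aspect ratio $\alpha$ and a per-rectangle prong count $n=\lfloor \beta/(4\alpha)\rfloor$ where you use the modulus of continuity $\omega$ and the spacing $h=\sqrt{\eta_M}$, but the estimates are the same. Two small points: the lift $h^2/(8s)$ is a \emph{lower} bound for the sagitta $s-\sqrt{s^2-(h/2)^2}$, not an upper bound, so it should be taken as $h^2/(4s)$ (matching the paper's choice $\delta_n=(L/2n)^2/s$), and where the paper attaches four horizontal prongs to every rectangle to cover the end caps of each tube's $s$-neighborhood, you instead rely on the neighboring chord's prong tips to cover the caps at interior junctions --- that does work once $M$ is large (the turning angle and $\delta$ are both small compared to $s$), but it is the one coverage step your sketch asserts without verifying.
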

\begin{proof}
  This proof is an application (with modifications) of the ideas behind Lemma \ref{lem:linesegment}.
  We begin with fact that for any aspect ratio  $\alpha > 0$, there exists a large enough $M \in \N$ such that the partition
  \begin{align*}
    \{t_{i}\}_{i=0}^{M} ~~ \mathrm{ where }~ t_i = \frac{i}{M}  
  \end{align*} 
  of $[0, 1]$ gives us that the images $\gamma([t_i, t_{i + 1}])$ are contained in rectangles $D_i$ (centered along $P^0_i \equiv [\gamma(t_i), \gamma(t_{i + 1})]$, see Figure (\ref{fig:C1_rec_pic})) of width $\mu_i$ and length $\rho_i$ where
  \begin{align*}
    \frac{\mu_i}{\rho_i} < \alpha. 
  \end{align*}
  We will choose $\{\mu_i, \rho_i, \alpha, n_i\}$ later.
  Using our partition, we construct a piecewise linear curve, starting  with
  \begin{align*}
    P^0 = \bigcup_{i = 0}^{M-1} P^0_{i} = \bigcup_{i = 0}^{M-1} [\gamma(t_i), \gamma(t_{i + 1})].
  \end{align*}
  \begin{figure}[htp!]
    \centering 
    \scalebox{.5}{\input{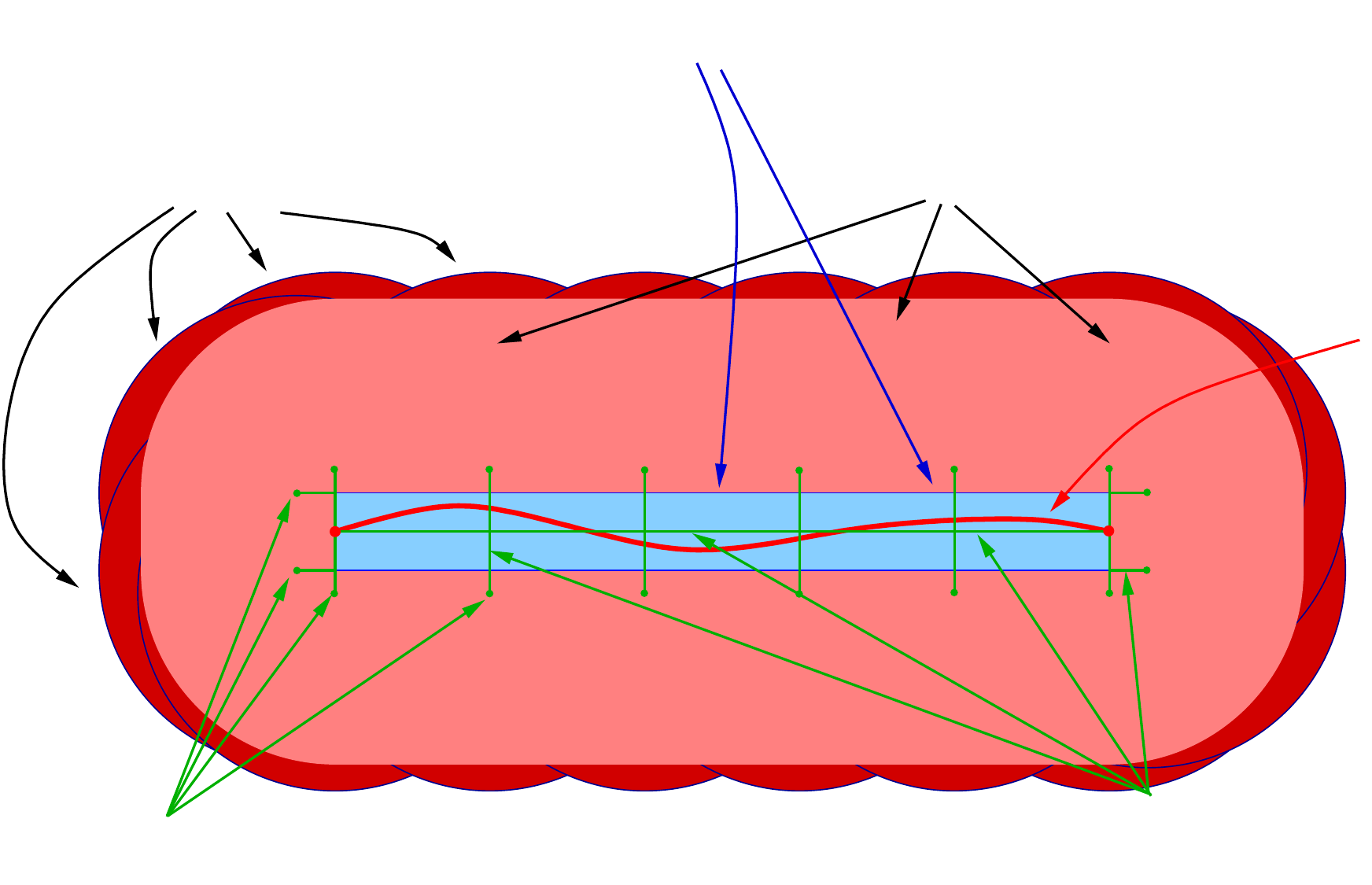_t}}
    \caption{\label{fig:C1_rec_pic}
      $C^1$ rectangle construction.
    }
  \end{figure}
  To each $P^0_i$ we now add prongs $P^1_{i,j}$ pointing up and down, of length $\mu_i/2 \,+ \,\delta_{n_i}$ for $j = 1, \dots, 2n_i+2$.
  We also add $4$ horizontal prongs $P^1_{i,j}$ for $j = 2n_i+3, \dots, 2n_i+6$, two at each end of the rectangle, each of length $\mu_i/2$.
  Centering balls of radius $s$ at each of the free ends of the $2n_i+6$ prongs creates a cover for the $s$-neighborhood of $D_i$.
  We will call the this piecewise linear curve (shown in green in Figure \ref{fig:C1_rec_pic}) $P_i$, and define it precisely as
  \[
     P_i \equiv P^0_i \bigcup \left( \mathop{\cup}_{j=1}^{2n_i+6} \, P^1_{i,j} \right)\,.
  \]
  The complete piecewise linear curve is just $P = \cup_{i=0}^{M-1} P_i$ whose end points (of which there are $\sum_{i=0}^{M-1} 2n_i+6$) are centers of an $s$-neighborhood of $\Gamma$.
  {\bfseries We now show that the excess length of $P$ is as small as you like, provided you choose $\alpha$ small enough.
  }

  \bigskip

  We can assume that $\rho_i < 1$ for all $i$ since choosing $M$ big enough enforces that condition.
  The length of the vertical prongs goes from $\delta_{n_i}$ (analogous to $\delta_n$ in the proof of the previous Lemma \ref{lem:linesegment}) to $\delta_{n_i}+\mu_i/2$.
  And we have added four horizontal prongs of length $\mu_i/2$, so the total length of $P_i$ goes from
  \[ \rho_i + 2(n_i+1)\frac{\rho_i^2}{4s n_i^2} \]
  to
  \begin{equation}  \label{eq:H1Pibnd} 
    \begin{aligned}
      \Hd^1(P_i) &\leq \rho_i + 2(n_i+1)\frac{\rho_i^2}{4s n_i^2} + 2\alpha\rho_i + (n_i+1)\alpha\rho_i\\ 
      &\leq \rho_i\left(1 + 2(n_i+1)\frac{\rho_i}{4s n_i^2} + 2\alpha + 2n_i\alpha\right)\\
      &\leq  \rho_i\left(1 + \frac{\rho_i}{s n_i} + 2\alpha + 2n_i\alpha\right)\\
      &\leq  \rho_i\left(1 + \frac{1}{s n_i} + (2n_i+2)\alpha\right)
    \end{aligned}
  \end{equation}

  At this point, this calculation gives us the length for any choice of $\{\rho_i, \mu_i, \alpha, n_i\}_{i=0}^{M-1}$.
  We will see that choosing $\alpha$ small enough and a universal $n$ (so that $n_i = n$ for all $i$) gets us what we want.
  Here are the steps:
  \begin{enumerate}
  \item Begin by choosing a (small) $\beta>0$.
    We will end up showing that $\Hd^1(P_i) \leq \rho_i(1+2\beta)$.
  \item \label{stp:alpha} Choose $\alpha$ small enough so that $n \equiv \floor*{\frac{\beta}{4\alpha}}$ satisfies $\frac{1}{s n} \leq \beta$.
  \item We note that to make sure the lifted balls cover the central interval, we need to have the lift to be less than $s - \delta_{n_i}$:
    \[ \mu_i/2 + \delta_{n_i} < s-\delta_{n_i}\]
    implying that
    \[ \mu_i/2 + 2\delta_{n_i} < s.\]
    This is analogous to $\delta_n$ shown in Figure~\ref{fig:LineSegmentCaseTotal}.
    Now, because:
    \begin{eqnarray*}
      \mu_i/2 + 2\delta_{n_i} &\leq& \alpha/2 + \frac{\rho_i^2}{2sn_i^2}\\
      &\leq& \alpha/2 + \frac{1}{2sn_i}\\
      &\leq& \alpha + \beta
    \end{eqnarray*}
   it suffices to require that:
    \[\alpha + \beta < s.\]
  \item Define $n_i \equiv n$. Due to the choice of $\alpha$ in Step \ref{stp:alpha},
    \[(2n_i + 2)\alpha \leq 4n_i\alpha \leq \beta.\]  
  \item We also need $\frac{\alpha}{2} < s$.
    In order to have the points at the ends of the 4 horizontal prongs cover the end of the rectangle, we need $\mu_i/2 < s$.
    But since  $\mu_i = \rho_i \alpha$ and we can choose $\rho_i < 1$ for all $i$ this means we want to have $\frac{\alpha}{2} < s$.
  \end{enumerate}
  this allows us to continue Equation (\ref{eq:H1Pibnd}) to get
  \begin{eqnarray*}
    \Hd^1(P_i) &\leq&  \rho_i\left(1 + \frac{1}{s n_i} + (2n_i+2)\alpha\right)\\
    &\leq& \rho_i\left(1 + \beta + \beta \right) = \rho_i(1+2\beta).
  \end{eqnarray*}
Because we know that  $\sum_{i=0}^{M-1} \rho_i \leq \Hd^1(\Gamma)$, we conclude
\begin{eqnarray*}
  \Hd^1(P) &\leq& \sum_{i=0}^{M-1} \Hd^1(P_i)\\
           &\leq& (1+2\beta) \sum_{i=0}^{M-1} \rho_i\\
           &\leq&  (1+2\beta) \Hd^1(\Gamma).
\end{eqnarray*}
\end{proof}

%   from lemma
%   \ref{lem:linesegment} being applied to each straight line segment
%   $[\gamma(t_i), \gamma(t_{i + 1})]$. Each prong $P_i$ will connect
%   the points in $X_i$, which are placed in such a way that
%   \begin{align*}
%     \B^2(X_i, s) \supset \B^2(D_i, s) \supset \B^2(\gamma[t_i, t_{i + 1}], s),
%   \end{align*}
%   and hence $\B^2(\cup_{i = 0}^N X_i, s) \supset \B^2(\Gamma, s)$.
%   For each $n \in \mathbb{N}$ we have the following bounds on the
%   lengths of the spines and the prongs
%   \begin{align*}
%     \mathcal{H}^1(T\cup P) = \mathcal{H}^1(T) + \mathcal{H}^1(P) = \sum_{i = 0}^N |\gamma(t_i) - \gamma(t_{i + 1})| + \sum_{i = 0}^N \mathcal{H}^1(P_i)
%   \end{align*}
%   The right term on the right side of the equality is equal to
%   \begin{align*}
%     \sum_{i = 0}^N 4h_i + 2(n + 1)(\mu_i + h_i) &= \sum_{i = 0}^N (2n + 5)h_i + 2(n + 1)\mu_i \\
%                                                 &= \sum_{i = 0}^N (2n + 5)\frac{\rho_i^2}{sn^2} + 2(n + 1)\mu_i\\
%                                                 &= \sum_{i = 0}^N Cn^{-1}|\gamma(t_i) - \gamma(t_{i + 1})|^2 + 2(n + 1)\alpha \rho_i \\
%                                                 &= Cn^{-1} + 2(n + 1)\alpha \mathcal{H}^1(\Gamma)
%   \end{align*}
%   let $\alpha = 1/n^2$. Letting $n \to \infty$ we get that
%   $\mathcal{H}^1(T\cup P) \to \mathcal{H}^1(\Gamma)$.

\subsection{Case III: Lipschitz curves}
\label{sec:lip-case}
\begin{comment}
  \begin{figure}[htp!]
  \centering \includegraphics[width = 6in]{}
  \caption{A finite collection number of good and bad intervals covering $I$.}
  \label{fig:InitialGoodAndBadIntervals}
  \end{figure}
\end{comment}

\begin{lem}\label{lem:main}
  Let $s > 0$ and $\Gamma\subset \R^2$ be a Lipschitz curve of positive length.
  Then given $\epsilon > 0$, there exists a finite point set $X := \{x_i\}_{i = 1}^N\subset \R^2$ and a Lipschitz curve $\Gamma_\ast$ that contains $X$ such that 
  \begin{align*}
    B(X, s) \supset B(\Gamma, s) \quad \mathit{and}\quad \mathcal{H}^1(\Gamma) \leq \mathcal{H}^1(\Gamma_\ast) \leq \mathcal{H}^1(\Gamma) + \epsilon. 
  \end{align*}
\end{lem}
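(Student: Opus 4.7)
The plan is to mimic the $C^1$ construction from Section~\ref{sec:C1-curves} on a ``good'' portion of $\Gamma$ where $\gamma$ is differentiable and well-approximated by its tangent, and to absorb a small ``bad'' portion using coarser, Lipschitz-based estimates. First I would invoke Theorem~\ref{thm:davidSemmes} to reparameterize $\Gamma$ as $\gamma:[0,L]\to\R^2$ with $|\gamma'(t)|=1$ a.e., so that $L=\mathcal{H}^1(\Gamma)$ and $\gamma$ is $1$-Lipschitz. Let $G\subset [0,L]$ be the full-measure set of differentiability points of $\gamma$. Fix a small $\xi>0$ (to be chosen in terms of $\epsilon$, $s$, and $L$), and using inner regularity pick a compact $K\subset G$ with $\mathcal{L}^1([0,L]\setminus K)<\xi$.

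Next I would upgrade differentiability on $K$ to a uniform local linearization. For each $t\in K$ and each $\eta>0$ there exists $r(t)>0$ such that $\gamma([t-r,t+r]\cap[0,L])$ is contained in an asymmetric strip $C(\gamma(t),V_t,\eta;[-r,r])$ (using the $V_\sharp,V^\perp,C(\cdot)$ notation of Definition~\ref{del:orthcplmt}) where $V_t=\mathrm{span}(\gamma'(t))$. By Vitali/Besicovitch covering on $K$ I would extract a finite, pairwise-disjoint family of closed intervals $\{I_j^g\}_{j=1}^{m}$ whose union covers all but a set of measure $<\xi$ in $K$, and each of which is contained in a thin rectangle $D_j$ of aspect ratio $\alpha$ (with $\alpha$ chosen as in the $C^1$ case so that the resulting ``excess'' is at most $\beta\,\mathcal{H}^1(\Gamma)$). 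The complement of $\bigcup_j I_j^g$ in $[0,L]$ consists of finitely many intervals $\{I_k^b\}_{k=1}^{m'}$ of total length at most $2\xi$; call these the \emph{bad} intervals and denote by $\Gamma^b$ their images under $\gamma$.

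On each good interval $I_j^g$, I would carry out verbatim the construction of Section~\ref{sec:C1-curves}: the piecewise-linear chord $P_j^0$ joining the endpoints $\gamma(I_j^g)$ inside the rectangle $D_j$, augmented with the $2n_j+6$ perpendicular and horizontal prongs of lengths $\mu_j/2+\delta_{n_j}$ and $\mu_j/2$. This produces a piecewise-linear curve $P_j$ whose free endpoints $X_j$ satisfy $B(X_j,s)\supset B(D_j,s)\supset B(\gamma(I_j^g),s)$ and $\mathcal{H}^1(P_j)\leq (1+2\beta)\mathcal{H}^1(\gamma(I_j^g))$. On each bad interval $I_k^b$, I would cover $\gamma(I_k^b)$ by a single line segment $Q_k$ connecting $\gamma$ at the endpoints of $I_k^b$ together with, if necessary, a short detour; crucially, since $\gamma$ is $1$-Lipschitz, $\gamma(I_k^b)$ is contained in a ball of radius $\mathcal{L}^1(I_k^b)$ centered at $\gamma$ of an endpoint, so covering $B(\gamma(I_k^b),s)$ requires only finitely many extra $s$-balls whose centers can be joined by a curve of length $O(\mathcal{L}^1(I_k^b))$. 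Taking $\Gamma_\ast$ to be the union of all $P_j$, all $Q_k$, and short bridging segments connecting the endpoints of adjacent pieces (whose total length is again controlled by the Lipschitz constant and $\xi$) yields a Lipschitz curve containing the required finite point set $X=\bigcup_j X_j\cup\bigcup_k Y_k$, with
\[
\mathcal{H}^1(\Gamma_\ast)\;\leq\;(1+2\beta)\mathcal{H}^1(\Gamma)+C\xi,
\]
for some constant $C=C(s)$, which is $\leq \mathcal{H}^1(\Gamma)+\epsilon$ once $\beta$ and $\xi$ are chosen small.

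The main obstacle I expect is ensuring that the family of thin rectangles on the good part can be chosen \emph{disjoint} while still exhausting $K$ up to small measure, and simultaneously that the bridging curves between consecutive pieces (good-to-good, good-to-bad, bad-to-bad) can be built so that (i) the resulting $\Gamma_\ast$ is genuinely a Lipschitz curve, i.e., admits a Lipschitz parameterization over a single interval, and (ii) the bridging contributes only $O(\xi)$ extra length. Handling this cleanly essentially requires using a Besicovitch-type selection to control overlaps and exploiting the $1$-Lipschitz bound to show that each bad piece has diameter $\leq \mathcal{L}^1(I_k^b)$, so that $O(1)$ many $s$-balls of total connecting length $O(\mathcal{L}^1(I_k^b)+s)$ per bad interval suffice. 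Since the number $m'$ of bad intervals can in principle be large, one must be careful and choose $K$ so that $m'$ is bounded (which is possible because $K$ is compact and the good cover is finite), thus turning the ``per-bad-interval'' $O(s)$ overhead into a finite sum that is still $\leq \epsilon/2$ after selecting the parameters in the right order.
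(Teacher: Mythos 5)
Your overall strategy is the one the paper uses: Rademacher plus inner regularity to split $[0,L]$ into a compact good set $K$ of differentiability points and a bad complement of measure $<\xi$, thin cones/rectangles with prongs on the good part exactly as in the $C^1$ case, and a crude Lipschitz diameter bound on the bad part. Two of the choices you make, however, create difficulties that the paper's version avoids. First, you build $\Gamma_\ast$ out of chords $P_j^0$, bad-interval segments $Q_k$, and bridging arcs, and you correctly identify connectivity of the result as your main obstacle. The paper dissolves this obstacle by taking $\Gamma_\ast := \Gamma \cup P \cup S$, i.e., keeping the original curve as the backbone and merely attaching prongs and spokes to it: every added segment meets $\Gamma$, so connectivity is automatic, no bridging is needed, and the lower bound $\mathcal{H}^1(\Gamma_\ast)\geq\mathcal{H}^1(\Gamma)$ required by the statement comes for free. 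With your chord-based construction that lower bound is not automatic (chords are shorter than arcs), and you do not address it.

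The genuine gap is in your treatment of the bad intervals. Your first estimate (connecting curve of length $O(\mathcal{L}^1(I_k^b))$ per bad interval) is the right one, but you then retreat to a bound of $O(\mathcal{L}^1(I_k^b)+s)$ per interval and propose to rescue the argument by choosing $K$ so that the number $m'$ of bad intervals is bounded. That rescue is neither justified nor available: for a compact $K\subset G$ the complement is in general a countable union of intervals, and shrinking $\mathcal{L}^1([0,L]\setminus K)$ gives you no control on the number of components; enlarging the complementary intervals to merge them destroys the containment $K\subset G$ on which the good-part cone estimates rest. The correct move, which eliminates the $O(s)$ term entirely, is the paper's spoke construction: writing the $k$-th bad interval as $B(b_k,\xi_k)$, its image lies in $B(\gamma(b_k),\mathrm{Lip}(\gamma)\xi_k)$, and the four points $\gamma(b_k)+\{(0,\pm 2\mathrm{Lip}(\gamma)\xi_k),(\pm 2\mathrm{Lip}(\gamma)\xi_k,0)\}$ have $s$-balls covering $B(\gamma(b_k),s+\mathrm{Lip}(\gamma)\xi_k)$ (for $\xi_k$ small relative to $s$), at a connecting cost of exactly $8\mathrm{Lip}(\gamma)\xi_k$. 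Summing gives total bad-part cost $\leq 8\mathrm{Lip}(\gamma)\xi$ with no dependence on the number of bad intervals, so the choice $\xi=\epsilon/(16\,\mathrm{Lip}(\gamma))$ closes the estimate. You should replace your fallback with this and drop the attempt to bound $m'$.
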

\begin{proof}
  Consider an arc-length parameterization $\gamma:[0, L] \to \R^2$ of $\Gamma$ where $L = \mathrm{length}(\Gamma)$. 
  
  Using this parameterization, we will construct such a closed and connected $\Gamma_\ast$ by adding extra small line segments to particular places of $\Gamma$.
  Precisely how we add these extra line segments will depend on whether we are centered around a {\it good} portion of $\Gamma$ or a {\it bad} portion of $\Gamma$.
  Because $\gamma$ is Lipschitz, most of $\Gamma$ will be a good portion, and we must therefore have tight control on how exactly we are adding these extra line segments.
  The line segments around these good portions will be denoted as $P$, and will be called {\it prongs}.
  In contrast, the bad portions of $\Gamma$ will be small, and will allow us to be more liberal in how we add the extra line segments around them.
  The line segments around these bad portions will be denoted as $S$, and will be called {\it spokes}.
  We will then define
  \begin{align*}
    \Gamma_\ast := \Gamma \cup P \cup S.
  \end{align*}
  Since $\gamma$ is Lipschitz, the set of differentiable points of $\gamma$, $G\subset I := [0, L]$ has full measure in $I$.
  We will call any $x \in G$ a {\it{good}} point, and any $x \in I\setminus G$ a {\it{bad}} point.
  The image around any good point will be contained in a cone whose aspect ratio will go to $0$.
  Precisely, for any $x \in G$
  \begin{align*}
    \frac{\eta(x, r, \rho(x, r))}{\rho(x, r)} \to 0\ \mathrm{as}\ r\to 0
  \end{align*}
  where
  \begin{align*}
    \rho(x, r) &= \inf\{t : S(y, \Tan(\Gamma, y), t)\supset \gamma(x - r, x + r)\}\ \mathrm{and}\ \\
    \eta(x, r, s) &= \inf\{h : C(y, \Tan(\Gamma, y), h/s; s) \supset \gamma(x - r, x + r)\}.
  \end{align*}
  If $x$ and $r$ is clear from context, we will simply refer to the above aspect ratio as $\eta/\rho$.
  Recall that Definition \ref{del:orthcplmt} introduces the sets $S$ and $C$.

  Given a small aspect ratio $\alpha > 0$, we will construct a particular partition of $I$ as follows.
  For any $\xi > 0$, since $G$ is Borel we may pick a compact subset $K$ of $G$ such that $\mathcal{L}^1(G\setminus K) < \xi$.
  So that the constants work out at the end, we let 
  \begin{align*}
    \xi := \frac{\epsilon}{16\mathrm{Lip}(\gamma)}.
  \end{align*}
  Differentiability of $\gamma$ in $G$ implies~\cite[\S3.1.21]{federer-1969-1} that for any $x \in K$ there is a small enough $R > 0$ such that for any $r \leq R$
  \begin{align*}
    \frac{\eta}{\rho} < \alpha.
  \end{align*} 
  Without loss of generality, we may assume that $\rho < 1$ and $\eta < s/100$.
  Therefore from the open cover $\mathcal{G}_\alpha = \{\U(x, R)\}_{x \in K}$ of $K$, we may extract a {\it good} finite subcover
  \begin{align*}
    \mathcal{G} = \{\U(x_i, R_i)\}_{i = 1}^{N}
  \end{align*} 
  of $K$ for which we may assume that no $\U(x_i, R_i)$ is contained in any other $\U(x_j, R_j)$.
  Since $I\setminus K$ is equal to a finite union of disjoint, connected, closed subintervals $\{\B(b_i, \xi_i)\}_{i = 1}^M$ for some $b_i \in I\setminus K$ and $\xi_i \geq 0$,
  we can define the corresponding {\it bad} finite cover
  \begin{align*}
    \mathcal{B} = \left\{\B(b_i, \epsilon_i)\right\}_{i = 1}^M
  \end{align*}
  of $I\setminus K$.
  We call $\{x_i\}_{i = 1}^N$ and $\{b_i\}_{i = 1}^M$ the set of good centers and the set of bad centers, respectively.
  Note that the way we chose our bad cover implies that there is always at least one good center in between any two bad centers.
  Also note that $\sum_{i = 1}^M 2\xi_i \leq \xi$.

  Let us now order all the good and bad centers $Z = \{z_i\}_{i=1}^{N+M} = \{x_i\}_{i=1}^N\cup \{b_i\}_{i=1}^M$ by their natural ordering in $\R$.
  In what follows, we will obtain a $u_i$ in between each $z_i$ and $z_{i + 1}$.
  Let $V_i$ and $V_{i+1}$ in $\mathcal{G} \cup \mathcal{B}$ be the cover elements corresponding to $z_i$ and $z_{i+1}$, respectively.
  If both $z_i$ and $z_{i+1}$ are good centers then $V_i \cap V_{i+1} \neq \emptyset$ and we may therefore pick a point $u_i \in V_i \cap V_{i+1}$ such that $z_i < u_i < z_{i+1}$.
  If $z_i$ is a good center and $z_{i+1}$ is a bad center let $u_i = {\mathbf{cl}}(V_i) \cap V_{i+1}$.
  Similarly, if $z_i$ is a bad center and $z_{i+1}$ is a good center, we let $u_i = V_i \cap {\mathbf{cl}}(V_{i+1})$.
  Lastly, to deal with the endpoints we will let $u_0 = 0$ and $u_{N + M} = 1$.
  For what follows it is important to note that
  \begin{align*}
    [u_i, u_{i+1}] &\subset {\bf{cl}}(V_{i+1})\ \mathrm{for\ any}\ i = 1, ..., N + M - 2,
  \end{align*}
  and that the map 
  \begin{align*}
    Z   & \to \{[u_{i-1}, u_i]\}_{i=1}^{N+M}\\
    z_i & \mapsto [u_{i-1}, u_i]
  \end{align*}
  is a bijection.
  This allows us to partition $\Gamma$ into good parts and bad parts, with the images of the intervals corresponding to good points and bad points, respectively.

  {\bf Covering good parts with prongs:} For $j = 1, \dots, N$ let $z_i := z_{i(j)}$ be a good center, let $Z_i := [u_{i - 1}, u_i]$ be its corresponding interval, and let $y_i := \gamma(z_i)$.
  Instead of considering the symmetric cone $C(y_i, \mathrm{Tan}(\Gamma, y_i), \eta_i/\rho_i; \rho_i)$ that contains $\gamma(\B(z_i, R_i))$, we will instead shorten this cone horizontally, as much as possible, while still containing $\gamma(Z_i)$. 

  \begin{comment}
    \begin{figure}[htp!]
      \centering
      \begin{minipage}{.5\textwidth}
        \centering \includegraphics[width=3.5in]{}
      \end{minipage}%
      \begin{minipage}{.5\textwidth}
        \centering \includegraphics[width=2.5in]{}
      \end{minipage}
      \caption{For any good point $x$ and it's image $y$, on the left we
        have our points $X_n$ our rectangle $R$, and the $s$-neighborhood
        of $R$. The $s$-neighborhood of $X_n$ covers the $s$-neighborhood
        of $R$. On the right, we have the union of line segments which
    contain our points $X_n$.}\label{fig:CoveringEpsilonNeighborhood}
    \end{figure}
  \end{comment}

  Assuming that $y_i = 0$ and that $\mathrm{Tan}(\Gamma, y_i) = \{(x, 0) : x \in \R\}$, we define this cone as
  \begin{align}\label{def:shortcone}
    C_i := C(y_i, \mathrm{Tan}(\Gamma, y_i), \eta_i/\rho_i; [\nu_i, \tau_i])
  \end{align}
  where
  \begin{align*}
    \nu_i = \inf\mathrm{Tan}(\Gamma, y_i)_\sharp[\gamma(Z_i)]\quad \mathrm{and}\quad \tau_i = \sup\mathrm{Tan}(\Gamma, y_i)_\sharp[\gamma(Z_i)]. 
  \end{align*}
  \begin{figure}[htp!]
    \centering \includegraphics[width=4in]{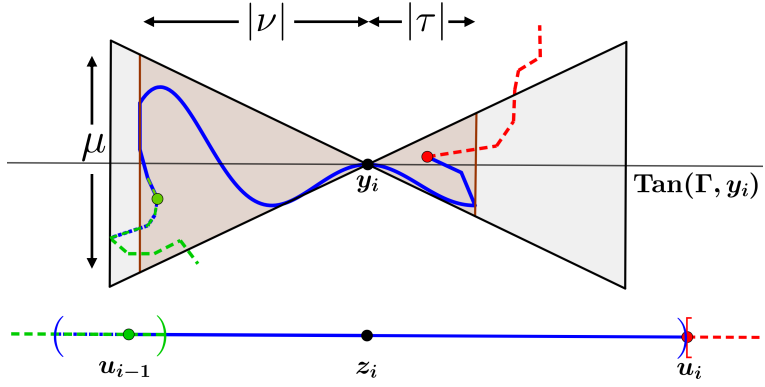}
    \caption{\label{fig:ImageInsideCone}
      The cone is shortened asymmetrically so that its ends intersect $\gamma(Z_i)$.
      Notice that $\mu/(|\nu| + |\tau|)$ is at most $2\alpha$.}
  \end{figure}

  Denote the height of the tallest side of this new cone as $\mu_i := 2\eta_i \max\{|\nu_i|, |\tau_i|\}/\rho_i$, and the width as $\bar{\rho_i} := |\nu_i - \tau_i|$ (see Figure~\ref{fig:ImageInsideCone}).
  Since $\gamma(Z_i)$ is closed, and is contained in the cone $C_i$, we have that $\Tan(\Gamma, y_i)_\sharp[\gamma(Z_i)] = \{(x, 0) : \nu_i \leq x \leq \tau_i\}$.
  Therefore, any line segment at least as long as $2\mu_i$ that is centered on and is perpendicular to $\Tan(\Gamma, y_i)_\sharp[\gamma(Z_i)]$ will also intersect $\gamma(Z_i)$.  

  Still focusing our attention around a good piece $\gamma(Z_i)$, we will construct a finite point set $X^n_i$ where 
  \begin{align}\label{eq:containment}
    B(X_i^n, s)\supset B(\gamma(Z_i), s).
  \end{align}
  Since the cone $C_i$ contains $\gamma(Z_i)$, we will guarantee the above result by showing that $B(X_i^n, s)\supset B(R_i, s) \supset B(C_i, s)$, where $R_i$ is the smallest rectangle that contains $C_i$ as is depicted in Figure (\ref{fig:ConeAndRectangle}), i.e.,
  \begin{align*}
    R_i := \{(x, y) : \nu_i \leq x \leq \tau_i, -\mu_i \leq 2y \leq \mu_i\}.
  \end{align*}
  The points in $X_i^n$ will then be connected by $\gamma(Z_i)\cup P_i$, where $P_i$ will consist of $n + 1$ number of equally spaced line segments of small enough length, all perpendicular to the tangent, together with four other short line segments.

  For each $i = 1,\dots, N + M$ let $n_i \in \N$ and define $\delta_{n_i} := (\bar{\rho}_i/2n_i)^2/s$. 
  We define
  \begin{align*}
    X_i^n := \left(\left\{\nu_i - \delta_{n_i}, \tau_i + \delta_{n_i}\right\}\times \left\{\pm\frac{\mu_i}{2}\right\}\right) \bigcup \left(\left\{\nu_i + k\frac{\bar{\rho_i}}{n_i}\right\}_{k = 0}^{n_i} \times \left\{\pm(\delta_{n_i} + \frac{\mu_i}{2})\right\}\right).
  \end{align*}
  Given the points in $X_i^{n_i}$, we then define the prongs 
  \begin{align*}
    P_i := \underbrace{([\tau_i, \tau_i + \delta_{n_i}]\cup[\nu_i - \delta_{n_i}, \nu_i])\times \{\pm\mu_i/2\}}_{\text{horizontal line segments}} \, \bigcup \, \underbrace{\{\nu_i + k\bar{\rho_i}/n_i\}_{k = 0}^{n_i}\times [-(\delta_{n_i} + \mu_i/2), \delta_{n_i} + \mu_i/2]}_{\text{vertical line segments}}.
  \end{align*}
  Note that
  \begin{align*}
    \mathcal{H}^1(P_i) &= 4\delta_{n_i} + 2 (n_i + 1)(\mu_i/2 + \delta_{n_i}).
  \end{align*}
  \begin{figure}[htp!]
    \centering
    \includegraphics[width = 6.2in]{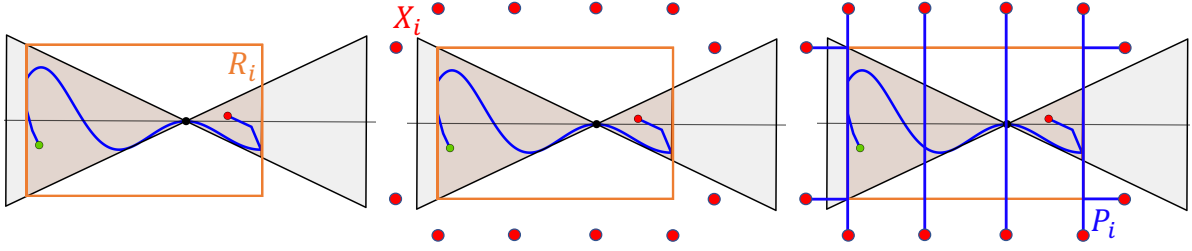}
    \caption{The rectangle of height $\mu_i$ and width $\bar{\rho}_i$, the finite point set $X_i$ such that $B(R_i, s) \subset B(X_i, s)$, and the prongs $P_i$ connecting $X_i$ to $\gamma(Z_i)$.}
    \label{fig:ConeAndRectangle}
  \end{figure}

  To show that $\gamma(Z_i)\cup P_i$ is connected, notice that each vertical line segment in $\{\nu_i + k\bar{\rho_i}/n_i\}_{k = 0}^{n_i}\times [-(\delta_{n_i, i} + \mu_i/2), \delta_{n_i} + \mu_i/2]$ of length $2\delta_{n_i} + \mu_i$ is centered on, and is perpendicular to $\Tan(\Gamma, y_i)_\sharp[\gamma(Z_i)]$.
  Also, each of the 4 line segments in $([\tau_i, \tau_i + \delta_{n_i}]\cup [\nu_i - \delta_{n_i}, \nu_i])\times \{\pm\mu_i/2\}$ is connected to some vertical line segment.

  In order for $B(X_i^{n_i}, s) \supset B(R_i, s)$, we require that $\mu_i + \delta_{n_i} < s - \delta_{n_i}$.
  This is guaranteed when $n_i$ is chosen large enough so that $1/s < n_i$.
  However, we will choose $n_i$ with more precision later.     

  {\bf Covering bad part with spokes:}
  For $j = 1, \dots, M$, let $z_i := z_{i(j)}$ be a bad point, let $Z_i := [u_{i - 1}, u_i]$ be its corresponding interval, and let $y_i := \gamma(z_i)$.
  We now construct the spokes $S_i$ connecting sets of points $Y_i$ such that 
  \begin{align}\label{eq:badcover}
    B(Y_i, s) \supset B(\gamma(Z_i), s).
  \end{align}
  Each of these spokes, $S_i$ will consist of line-segments emanating from the image of the corresponding bad center.
  The length of these line-segments will be bounded above by the length of the bad center's interval and $\text{Lip}(\gamma)$.

  Recalling that bad intervals $Z_i = B(z_i, \xi_i)$, we get that $\gamma[B(z_i, \xi_i)] \subset B(\gamma(z_i), \mathrm{Lip}(\gamma)\xi_i)$ implies $\gamma(Z_i) \subset B(\gamma(z_i), \mathrm{Lip}(\gamma)\xi_i)$.
  Therefore, constructing a $Y_i$ such that 
  \begin{align*}
    B(Y_i, s) \supset B(\gamma(z_i), \mathrm{Lip}(\gamma)\xi_i + s)
  \end{align*}
  will give us the result in Equation (\ref{eq:badcover}).
  To this end, we simply define $Y_i$ to be
  \begin{align*}
    Y_i := \gamma(z_i) + \{(0, \pm 2\mathrm{Lip}(\gamma)\xi_i), (\pm 2\mathrm{Lip}(\gamma)\xi_i, 0)\}.
  \end{align*}
  Now, the collection of spokes $S_i$ consists simply of line segments connecting every point in $Y_i$ to the center $\gamma(z_i)$.
  Precisely, 
  \begin{align*}
    S_i := \gamma(z_i) + (([-2\mathrm{Lip}(\gamma)\xi_i, 2\mathrm{Lip}(\gamma)\xi_i]\times \{0\}) \cup (\{0\}\times[-2\mathrm{Lip}(\gamma)\xi_i, 2\mathrm{Lip}(\gamma)\xi_i])).
  \end{align*}
  Note that $\mathcal{H}^1(S_i) = 8\mathrm{Lip}(\gamma)\xi_i$.
  See Figure (\ref{fig:CoveringBadImageWithSpokes}) for an illustration of this step.
  It is clear that $\Gamma \cup S_i$ is connected since $\gamma(z_i)$ is in $S_i$ and $\Gamma$.
  \begin{figure}[h]
    \centering
    \includegraphics[width=2.5in]{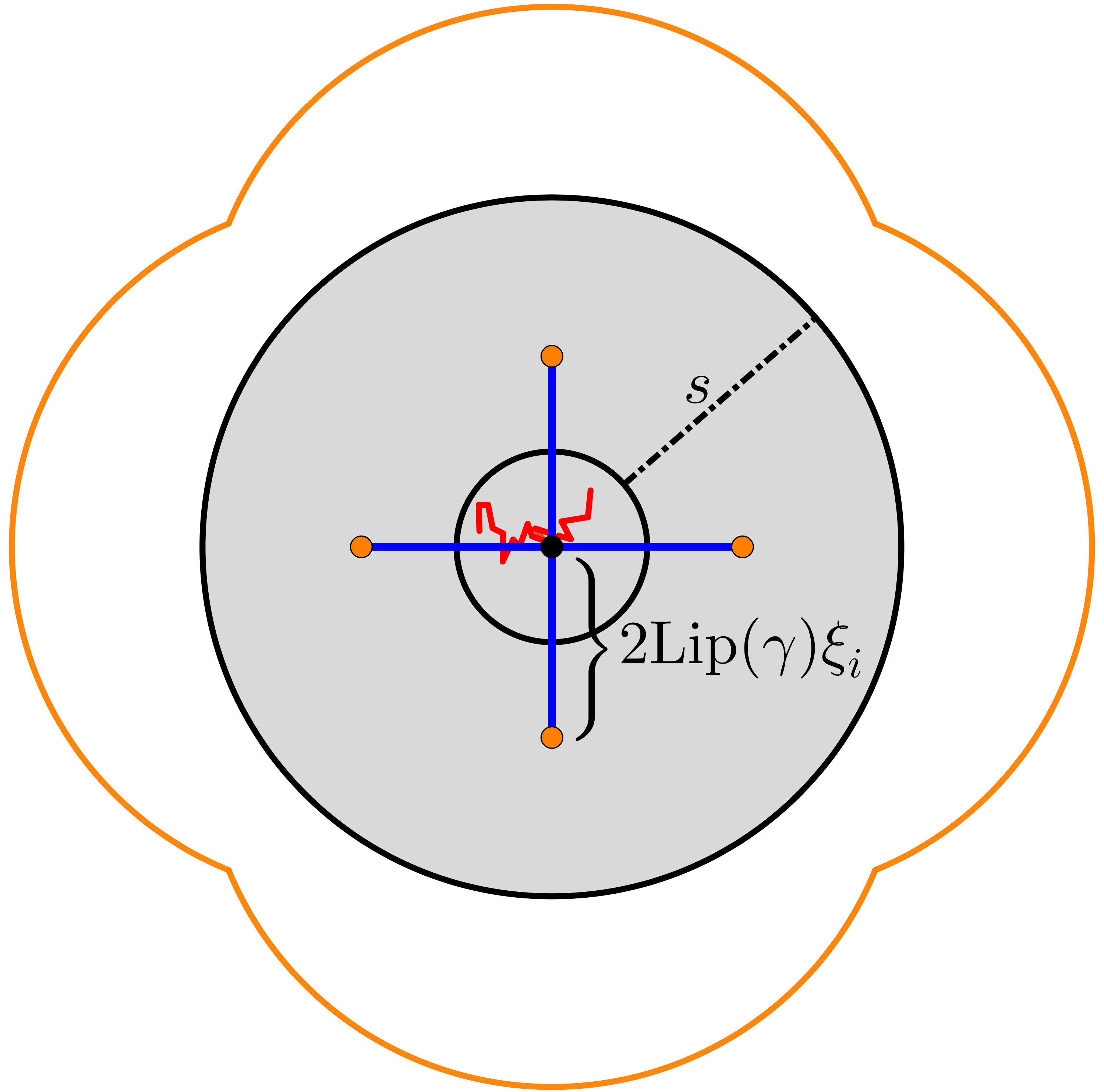}
    \caption{  \label{fig:CoveringBadImageWithSpokes}
      Illustration of the cover of a part of the bad subset of $\Gamma$ with spokes.
      The boundary of the four balls that cover the $s$-neighborhood of the small ball covering the bad piece is shown in yellow.
    }
  \end{figure}

  {\bf Estimating $\mathcal{H}^1(\Gamma\cup P\cup S)$:}
  We now find suitable upper bounds for $\mathcal{H}^1(\Gamma \cup P\cup S) \leq \mathcal{H}^1(\Gamma) + \mathcal{H}^1(P) + \mathcal{H}^1(S)$, where
  \begin{align}
    \mathcal{H}^1(P) = \sum_{i = 1}^N 4\delta_{n_i} + 2(n_i + 1)(\mu_i/2 + \delta_{n_i})\quad \mathrm{and} \quad \mathcal{H}^1(S) = \sum_{i = 1}^M 8\mathrm{Lip}(\gamma)\xi_i.
  \end{align}
  First, by our initial choice of $Y_i$, we simply have that 
  \begin{align*}
    \mathcal{H}^{1}(S) &\leq 8\mathrm{Lip}(\gamma)\xi \leq \frac{\epsilon}{2}.
  \end{align*}
  As for the first term, we let $0 < \beta \leq \epsilon/(4L)$.
  We will show the existence of $\alpha > 0$ and $n_i := n > 0$ so that 
  \begin{align*}
    \mathcal{H}^1(P) \leq \beta L \leq \frac{\epsilon}{2}.
  \end{align*}
  First, recall that given $\alpha > 0$, since for each $i = 1, \dots, N$, we have that $\mu_i/\rho_i < \alpha$ (hence $\mu_i/\bar{\rho}_i < 2\alpha$), and that for any $n \in \mathbb{N}$, 
  \begin{align*}
    \mathcal{H}^1(P_i) = 4\delta_{n_i} + 2(n_i + 1)(\mu_i/2 + \delta_{n_i}).
  \end{align*}
  We first make sure that $\alpha > 0$ is picked small enough so that $n := \floor{\beta/12\alpha}$ satisfies the two conditions
  \begin{align*}
    \frac{1}{sn} < \beta\quad \mathrm{and} \quad\frac{1}{s} < n.
  \end{align*} 

  For all $i = 1, \dots, N$, we let $n_i := n$.
  The first condition will give us Inequality~(\ref{eq:firstcond}), and the second condition implies that $\delta_{n_i} \leq 2\alpha\bar{\rho}_i$ and also that $\mu_i + \delta_{n_i} < s - \delta_{n_i}$ (and hence $B(X_i^{n_i}, s) \supset B(R_i, s)$).
  Therefore,   
  \begin{align}
    \mathcal{H}^1(P_i) &= 4\delta_{n_i} + 2(n_i + 1)(\mu_i/2 + \delta_{n_i})\\
    &\leq 4(2\alpha\bar{\rho}_i) + 2(n_i + 1)\left(\alpha\bar{\rho}_i + \frac{\bar{\rho}^2_i}{4sn_i^2}\right)\\
    &= \bar{\rho}_i\left(8\alpha + 2(n_i + 1)\alpha + \frac{(n_i + 1)\bar{\rho}_i}{2s n_i^2}\right)\\
    &\leq \bar{\rho}_i\left(8\alpha + 4n_i\alpha + \frac{\bar{\rho}_i}{s n_i}\right)\\
    &\leq \bar{\rho}_i\left(8\alpha + 4n_i\alpha + \bar{\rho}_i\beta\right)\label{eq:firstcond}\\
    &\leq \bar{\rho}_i(12\alpha n_i + \bar{\rho}_i\beta)\\
    &\leq \bar{\rho}_i(\beta + \bar{\rho}_i\beta)\\
    &\leq 2\bar{\rho}_i\beta.
  \end{align}
  Therefore, we can now see that 
  \begin{align*}
    \mathcal{H}^1(P) &\leq \sum_{i = 1}^N \mathcal{H}^1(P_i)\\
    &= 2\beta\sum_{i = 1}^N \bar{\rho}_i\\
    &\leq 2\beta L\\
    %&\leq 2\beta C\mathcal{H}^1(\Gamma)\\
    %&\leq (\epsilon/2)\mathcal{H}^1(\Gamma).
    &\leq \frac{\epsilon}{2}.
  \end{align*}
  Putting everything together, we get that 
  \begin{align*}
    \mathcal{H}^1(\Gamma_\ast) &\leq \mathcal{H}^1(\Gamma) + 2\beta L + 8\mathrm{Lip}(\gamma)\xi\\
    &= \mathcal{H}^1(\Gamma) + \epsilon.
  \end{align*}
  
  \vspace*{-0.3in}
\end{proof}

\begin{remark}\label{rem:MPS}
  The key difference between the techniques used by Miranda Jr.~et al.~\cite{miranda2006one} and ones we use in Lemma~\ref{lem:main} is that we use a parameterization $\gamma : I \to \mathbb{R}^2$ of $\Gamma$, whereas they work only with its {\it image} $\Gamma$. \\ \vspace*{-0.05in} \\ 
In addition, we provide explicit locations for a finite number of points whose $s$-balls cover $B(\Gamma, s)$.
In contrast, Miranda Jr.~et al~provide a new curve $\Gamma_\ast$ with a potentially smaller neighborhood that will cover $B(\Gamma, s)$. \\ \vspace*{-0.05in} \\ 
Rather than partitioning $\Gamma$ with the images of good and bad portions of the domain of $\gamma$ obtained with Rademacher's theorem, Miranda Jr.~et al~\cite{miranda2006one} partition the image of $\gamma$ by applying Egorov's theorem to a sequence of functions $\beta_k : \Gamma \to \mathbb{R}$ that is meant to capture \emph{the flatness of $\Gamma$ around $x \in \Gamma$ at scale $k^{-1}$}. 
These functions are defined as
\[ \beta_k(x) = \inf_{\Pi} \sup_{y \in \Gamma \cap B(x, k^{-1})} \frac{\mathrm{dist}(y, \Pi)}{k^{-1}}\]
where $\Pi$ is a line containing $x$. \\  \\ 
Our use of a parameterization provides a certain benefit in the case where $\gamma$ is injective.  
In particular, their choice to use two vertical line segments per rectangle as opposed to our choice of many, requires them to use more rectangles, and in turn, approximately double the number of necessary line segments.
We illustrate this difference in the case of $\Gamma$ being a line segment as in Lemma~\ref{lem:linesegment}. 
Partitioning the line segment for $N = 3$, our method would provide an excess length of $8\delta$, whereas using their method with $N=3$ rectangles would require $12\delta$ 
(see Figure~\ref{fig:comparison}).
In general, excess length using their method will be higher by $(N-1)2\delta$. \\ \vspace*{-0.05in} \\ 
Stepanov and Paolini ~\cite{paolini2004qualitative} proved a generalization of Theorem 3.7 of Miranda Jr.~et al.~\cite{miranda2006one} to the case of continua in $\mathbb{R}^n$ using a similar construction to one used by the latter. 
\end{remark}

\begin{figure}[htp!]
    \centering
    \includegraphics[width = 3in]{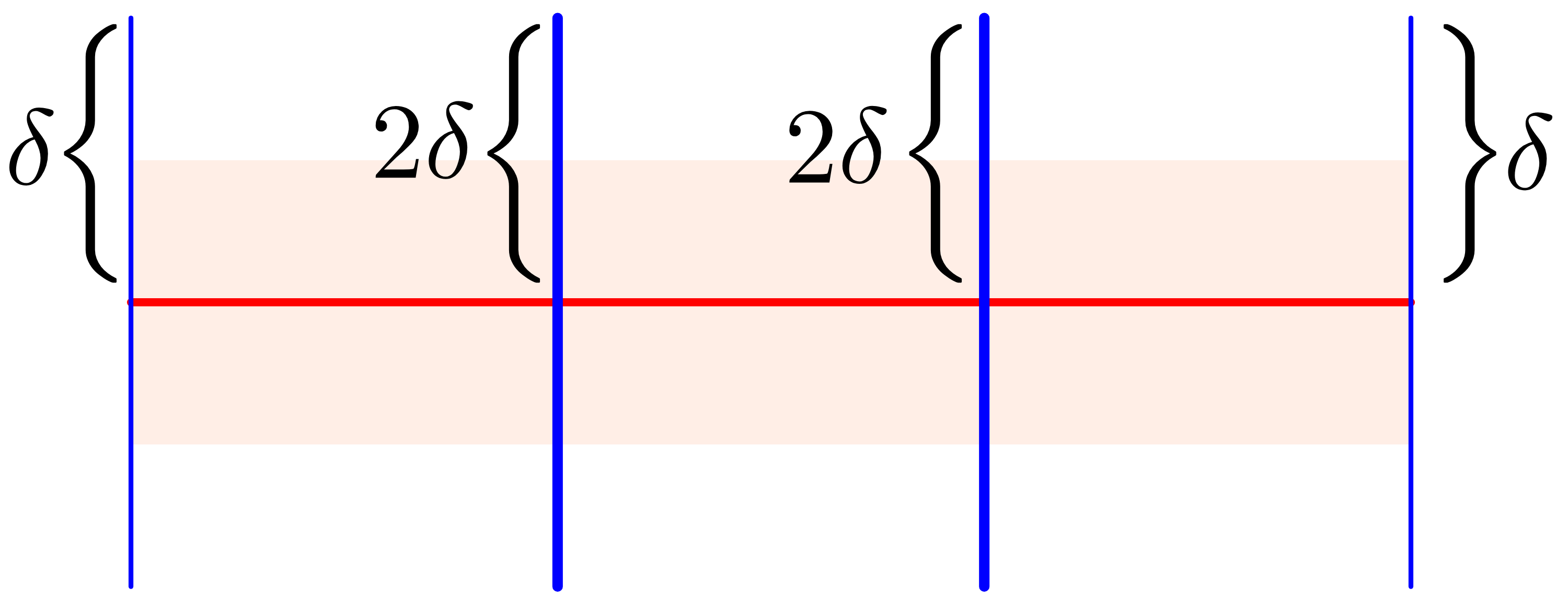}
    \caption{\label{fig:comparison}
        Comparing the method we use in proof of Lemma~\ref{lem:linesegment} with that of Miranda Jr.~et al.~\cite{miranda2006one} for the case when $\Gamma$ is a line segment.
        Note that the middle two segments are shown thicker to indicate the two copies of adjacent line segments as used in the latter approach.}
\end{figure}

\subsection{Case IV: Finite Continua}

In the most general case when $\Gamma \subset \R^2$ is a finite continuum, Lemma~\ref{lem:main} still holds.
As mentioned in Remark~\ref{rem:equivalence}, this is due to the important fact that finite continua, 1-rectifiable continua, and Lipschitz curves are all equivalent. 

\begin{thm}\label{thm:main}
  Let $E\subset \R^2$ be compact and let $s > 0$. Then 
  \begin{align*}
    \sigma(E, s) = \lambda(E, s).
  \end{align*}
\end{thm}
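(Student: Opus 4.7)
The plan is to prove the theorem as an almost immediate corollary of the existence theorem (Theorem~\ref{thm:existence}), the Lipschitz parameterization result (Theorem~\ref{thm:davidSemmes}), and the main technical Lemma~\ref{lem:main}. The strategy is to establish the two inequalities $\lambda(E,s) \leq \sigma(E,s)$ and $\sigma(E,s) \leq \lambda(E,s)$ separately; the first is essentially trivial, while the second is where Lemma~\ref{lem:main} carries all the weight.

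For the easy direction, I would observe that any Steiner tree $S_X$ appearing in the definition of $\sigma(E,s)$ is itself a finite continuum, since it is a closed, connected union of finitely many line segments with finite $\mathcal{H}^1$ measure. Moreover, since $X \subset S_X$, we have $B(S_X, s) \supset B(X, s) \supset E$, so $S_X$ is admissible in the definition of $\lambda(E,s)$. Taking the infimum over all admissible $X$ gives $\lambda(E,s) \leq \sigma(E,s)$.

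For the harder inequality, I would fix $\epsilon > 0$ and begin by invoking Theorem~\ref{thm:existence} to produce a minimizer $\Gamma$ of $\lambda(E,s)$, so that $\Gamma$ is a finite continuum with $B(\Gamma, s) \supset E$ and $\mathcal{H}^1(\Gamma) = \lambda(E,s)$. By the equivalence described in Remark~\ref{rem:equivalence} (i.e., Theorem~\ref{thm:davidSemmes}), $\Gamma$ is a Lipschitz curve, so Lemma~\ref{lem:main} applies: there exist a finite point set $X \subset \R^2$ and a Lipschitz curve $\Gamma_\ast \supset X$ such that
\[
B(X, s) \supset B(\Gamma, s) \quad \text{and} \quad \mathcal{H}^1(\Gamma_\ast) \leq \mathcal{H}^1(\Gamma) + \epsilon.
\]
In particular $B(X,s) \supset E$, so $X$ is admissible in the definition of $\sigma(E,s)$. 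Since $\Gamma_\ast$ is closed, connected and contains $X$, it is a competitor in the Steiner problem for $X$, and hence $\mathcal{H}^1(S_X) \leq \mathcal{H}^1(\Gamma_\ast) \leq \mathcal{H}^1(\Gamma) + \epsilon = \lambda(E,s) + \epsilon$. This yields $\sigma(E,s) \leq \lambda(E,s) + \epsilon$, and letting $\epsilon \to 0^+$ gives $\sigma(E,s) \leq \lambda(E,s)$.

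There is no real obstacle at the level of Theorem~\ref{thm:main} itself; the only subtlety I need to double-check is that a minimizer $\Gamma$ truly has $\mathcal{H}^1(\Gamma) > 0$ so that Lemma~\ref{lem:main} applies as stated. When $E$ consists of a single point (or $\lambda(E,s) = 0$ more generally), the theorem is immediate since one may take $X = \{p\}$ for any $p$ within distance $s$ of $E$, giving $\sigma(E,s) = 0$ as well. Apart from this trivial corner case, combining the two inequalities above closes the proof, and Corollary~\ref{cor:main} then follows from Remark~\ref{rem:main}.
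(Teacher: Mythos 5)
Your proof is correct and follows essentially the same route as the paper: invoke Theorem~\ref{thm:existence} for a minimizer, use the Lipschitz-curve characterization of finite continua to apply Lemma~\ref{lem:main}, and squeeze $\mathcal{H}^1(S_{X})$ between $\lambda(E,s)$ and $\lambda(E,s)+\epsilon$. Your explicit treatment of the easy inequality $\lambda(E,s)\leq\sigma(E,s)$ and of the degenerate case $\mathcal{H}^1(\Gamma)=0$ (which the paper leaves implicit, even though Lemma~\ref{lem:main} assumes positive length) is a small but welcome addition of care rather than a different argument.
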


\begin{proof}
  By the existence result in Theorem~\ref{thm:existence}, we know that a minimizer $\Gamma$ of $\lambda(E, s)$ is a compact, connected set such that $\mathcal{H}^1(\Gamma) < +\infty$.
  Letting $\epsilon > 0$, by Lemma~\ref{lem:main} there exists a finite point set $X_\epsilon\subset \R^2$ and a compact and connected $\Gamma_\epsilon$ containing $X_\epsilon$ such that
  \begin{align*}
    B(\Gamma, s) \subset B(X_\epsilon, s)\quad \mathrm{and}\quad \mathcal{H}^1(\Gamma_\epsilon) \leq \mathcal{H}^1(\Gamma) + \epsilon.
  \end{align*}
  In particular, any Steiner tree $S_{X_\epsilon}$ over $X_\epsilon$ will be a candidate minimizer for $\sigma(E, s)$ and $\lambda(E, s)$ and will satisfy
  \begin{align*}
    \mathcal{H}^1(\Gamma) \leq \mathcal{H}^1(S_{X_\epsilon}) \leq \mathcal{H}^1(\Gamma_\epsilon).
  \end{align*}
  Therefore we get that $\mathcal{H}^1(S_{X_\epsilon}) \leq \mathcal{H}^1(\Gamma) + \epsilon$.
  Letting $\epsilon \to 0$ proves our theorem.
\end{proof}

The final corollary follows from Remark~\ref{rem:main}.
It says that when we define $\sigma(E, s)$, instead of taking Steiner trees over $X$, we can take minimum spanning trees over $X$, and get the same result of Theorem~\ref{thm:main}.

\begin{corollary}\label{cor:main}
  Let $E\subset \R^2$ be compact and let $s > 0$.
  Define the analogous 
  \begin{align*}
    \sigma'(E, s) :=& \inf\{\mathcal{H}^1(T_X) : X = \{x_i\}_{i = 1}^N,\ B(X, s) \supset E\}
  \end{align*}
  where we take minimum spanning trees $T_X$ over $X$, instead of Steiner trees.
  Then 
  \begin{align*}
    \sigma'(E, s) = \lambda(E, s).
  \end{align*}
\end{corollary}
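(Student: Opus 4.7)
The plan is to deduce the corollary from Theorem~\ref{thm:main} by establishing the two inequalities $\sigma'(E,s) \leq \sigma(E,s)$ and $\lambda(E,s) \leq \sigma'(E,s)$, and then invoking $\sigma(E,s)=\lambda(E,s)$ to close the chain. Since both $\sigma$ and $\sigma'$ are infima over the same family of finite point sets $X$ with $B(X,s)\supset E$, only the choice of tree structure changes, and the identity noted in Remark~\ref{rem:main} bridges the two.

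For $\sigma'(E,s) \leq \sigma(E,s)$, I would start with an arbitrary finite $X$ with $B(X,s)\supset E$ and an arbitrary Steiner tree $S_X$ with Steiner-point set $Y$. Setting $X' := X \cup Y$, one has $B(X',s) \supset B(X,s) \supset E$ since $X \subset X'$, so $X'$ is itself admissible for $\sigma'(E,s)$. By Remark~\ref{rem:main}, the minimum spanning tree $T_{X'}$ coincides with $S_X$, so $\mathcal{H}^1(T_{X'}) = \mathcal{H}^1(S_X)$. Consequently $\sigma'(E,s) \leq \mathcal{H}^1(S_X)$, and taking the infimum over admissible $X$ and corresponding $S_X$ yields $\sigma'(E,s) \leq \sigma(E,s)$.

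For $\lambda(E,s) \leq \sigma'(E,s)$, I would take an arbitrary admissible $X$ for $\sigma'(E,s)$ and observe that any minimum spanning tree $T_X$ is a finite union of closed line segments with endpoints in $X$; in particular $T_X$ is compact, connected, has $\mathcal{H}^1(T_X) < \infty$, and contains $X$. Therefore $B(T_X,s) \supset B(X,s) \supset E$, so $T_X$ is a legitimate candidate in the definition of $\lambda(E,s)$, giving $\lambda(E,s) \leq \mathcal{H}^1(T_X)$. Taking the infimum yields $\lambda(E,s) \leq \sigma'(E,s)$.

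Finally, chaining these with Theorem~\ref{thm:main} gives
\begin{align*}
\lambda(E,s) \;\leq\; \sigma'(E,s) \;\leq\; \sigma(E,s) \;=\; \lambda(E,s),
\end{align*}
so equality holds throughout. There is essentially no obstacle here beyond bookkeeping: the heavy lifting is done by Theorem~\ref{thm:main} (and hence Lemma~\ref{lem:main}), while the pivot between Steiner trees and minimum spanning trees is handled entirely by the observation in Remark~\ref{rem:main} that a Steiner tree over $X$ is precisely the minimum spanning tree of $X$ augmented by its Steiner points.
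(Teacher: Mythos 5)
Your proposal is correct and follows essentially the same route as the paper: both pivot on Remark~\ref{rem:main} (augmenting $X$ by the Steiner points turns $S_X$ into a minimum spanning tree) and then invoke Theorem~\ref{thm:main}. The only difference is that you close the chain via the immediate bound $\lambda(E,s) \leq \sigma'(E,s)$ rather than asserting $\sigma'(E,s) = \sigma(E,s)$ outright, which is a slightly more explicit piece of bookkeeping than the paper bothers to record.
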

\begin{proof}
  Given any Steiner tree $S_X$ over a finite point set $X$, there exist a finite number of Steiner points $X'$.
  Then for any minimum spanning tree $T_{X\cup X'}$ over $X\cup X'$, we get that $\mathcal{H}^1(T_{X\cup X'}) = \mathcal{H}^1(S_X)$.
  Therefore
  \begin{align*}
    \sigma'(E, s) = \sigma(E, s),
  \end{align*}
  and we get $\sigma'(E, s) = \lambda(E, s)$ by Theorem~\ref{thm:main}. 
\end{proof}

\section{Computational Exploration} \label{sec:compexplr}
\vspace*{-0.025in}

We have implemented in Python a framework for computational exploration of the maximum distance problem in $\R^2$ using minimum spanning trees.
The framework is available as open source at \href{https://github.com/mtdaydream/MDP_MST}{https://github.com/mtdaydream/MDP\_MST}.
We employ functions from the Shapely package \cite{shapely} for most of the geometric operations.
Minimum spanning trees are computed using our implementation of Kruskal's algorithm \cite{Kr1956}.
Sample output from the package is shown in Figure \ref{fig:compute}.

The domain $E$ is specified by a sequence of points on its boundary, defined by the union of edges connecting consecutive pairs of the points.
Disjoint holes are allowed in $E$.
For a nominal radius $s$, the user then chooses vertices for the minimum spanning tree $T$.
Coverage of $E$ by the minimum spanning tree $T+s$-ball is verified and displayed.
The user can then vary the value of $s$ while keeping $T$ fixed.
Alternatively, they could choose a different set of vertices for a new MST.

\begin{figure}[hbp!]
  \vspace*{0.02in}
  \hspace*{0.05in}
  \includegraphics[width=3in]{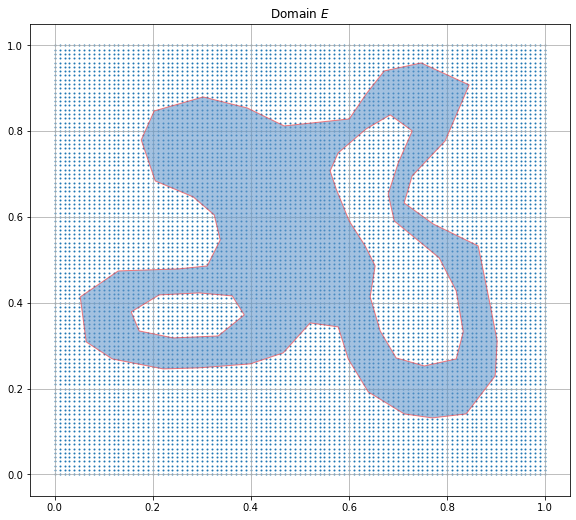}
  \hspace*{0.22in}
  \includegraphics[width=2.95in]{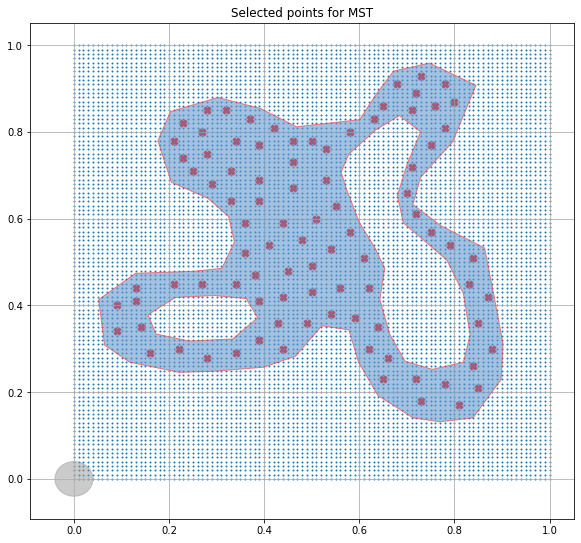}\\
  \includegraphics[width=3.08in]{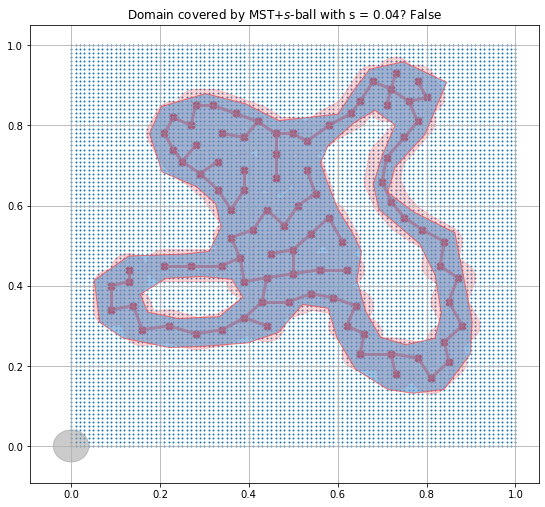}
  \hspace*{0.2in}
  \includegraphics[width=3in]{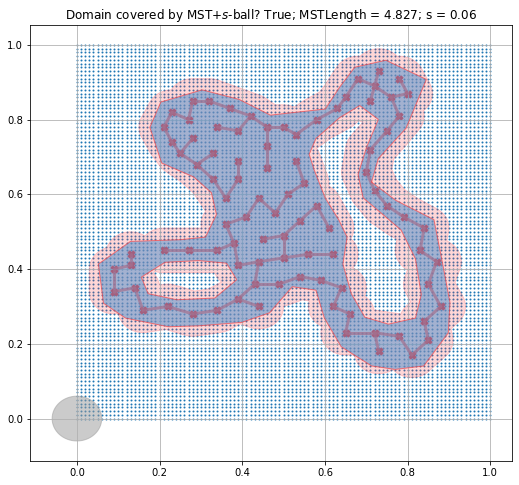}
  \caption{\label{fig:compute}
    A closed domain $E$ with two disjoint holes (top left), and the set of vertices selected for the minimum spanning tree (MST) with a nominal radius of $s=0.04$ (top right).
    But the  MST $T+s$-ball does not cover the domain $E$ (bottom left).
    Instead, with $s=0.06$ for instance, we do get the same $T+s$-ball covering $E$ (bottom right).
    Length of the MST is $\Hd^1(T)=4.827$.
  }
\end{figure}
%\clearpage

%\clearpage
\section{Discussion} \label{sec:disc}
We have shown that for compact sets $E \subset \R^2$, solving the  maximum distance problem for $s > 0$ by minimizing over continua whose $s$-neighborhoods cover $E$, reduces to simply minimizing over finite collections of balls of radius $s$ which cover $E$.
In the proof of our main theorem, we use knowledge of a minimizer to construct the covering of $E$ with balls. 

Motivated in part by the related traveling salesman problem with neighborhoods \cite{deBGuKaLeOvvdS2005} and approximation schemes for the same \cite{AnFlHoSc2019}, we could investigate ways to approximate $\sigma(E, s)$ without knowledge of minimizers. 
For instance, we could consider finite dimensional approximations of $\sigma(E, s)$ where we are allowed to use only $n$ number of balls of radius $s$ to cover $E$, for some fixed $n \in \mathbb{N}$.
Precisely, for each $n \in \mathbb{N}$, we look at the topological subspaces $\mathcal{M}_n$ of the \emph{$n$-th unordered configuration spaces of $\R^n$}, i.e., we define $\mathcal{M}_n$ to be
\[\{(x_1, \dots, x_n) \in \R^{2n} : \cup_{i = 1}^nB(x_i, s) \supset E\} \,\setminus\, \{(x_1, \dots, x_n) : x_i \neq x_j\ \text{for some}\ i\neq j\}\]
{\it modulo} the action of the symmetry group of order $n$ on the indices of the coordinates $(x_1, \dots, x_n)$. 
Note that $x_i$ represents the 2D coordinates of the $i$-th point.
We observe that $\mathcal{M}_n$ is also a topological subspace of the \emph{a priori} infinite dimensional space $\cup_n \mathcal{M}_n$ we used to compute $\sigma(E, s)$. 
We may then investigate the analogous minimization problem 
\[\sigma_n(E, s) := \{\mathcal{H}^1(S_X) : X \in \mathcal{M}_n\}\]
as a finite dimensional approximation of $\sigma$. 
Our results show that computing $\sigma_n$ is a reasonable approximation of $\sigma$, and hence also of $\lambda$, since 
\[\lim_{n\to \infty}\sigma_n = \sigma = \lambda.\]
Following Corollary \ref{cor:main}, we may consider the minimal spanning tree (MST) $T_X$ in place of the Steiner tree $S_X$, as it is more efficient to compute MSTs.
%Using many of the covering techniques that have been studied ({\color{red} See xxx})
We may take a finite subspace $\mathcal{X} \subset \mathcal{M}_n$ specified by a finite sample, and compute the minimal spanning tree $T_X$ over each sample $X \in \mathcal{X}$.

As a alternative approach, we want to study sublevel set persistent homology \cite{CoEdHa2007,EdHa2009} of the Vietoris-Rips filtration $(\text{VR}(\mathcal{X}; r),  T_r)_{r > 0}$ as a way to investigate the topology of $\mathcal{M}_n$, and the solutions of the associated MDP.

\bibliographystyle{plain}
%\bibliography{ST_Bib,homology}
\input{SteinerTreesAndFTSP_v3.bbltex}
	
\end{document}